\documentclass[12pt,reqno]{amsart}

\usepackage{amssymb,amsfonts,latexsym,amstext,epsfig,color}

\usepackage[left=2.5cm,top=2.5cm,right=2.5cm,bottom=2.5cm]{geometry}

\newtheorem{theorem}{Theorem}

\newtheorem{lemma}[theorem]{Lemma}
\newtheorem*{claim*}{Claim}

\newtheorem{corollary}[theorem]{Corollary}

\newcommand{\Nnn}{{\mathbb N}}

\newcommand{\R}{\mathbb R}
\newcommand{\N}{\mathbb N}
\newcommand{\Z}{\mathbb Z}
\newcommand{\PP}{\mathbb P}

\newcommand{\fracpart}[1]{\langle #1\rangle}
\newcommand{\gec}{g.e.c.}
\newcommand{\idf}{i.d.f.}
\newcommand{\iop}{i.o.p.}
\newcommand{\LARG}{\mathrm{LARG}}

\begin{document}

\title{Geometric random graphs and Rado sets in sequence spaces}
\author{Anthony Bonato}
\address{Department of Mathematics\\
Ryerson University\\
Toronto, ON\\
Canada, M5B 2K3} \email{abonato@ryerson.ca}
\author{Jeannette Janssen}
\address{Department of Mathematics and Statistics\\
Dalhousie University\\
Halifax, NS\\
Canada, B3H 3J5} \email{jeannette.janssen@dal.ca}
\author{Anthony Quas}
\address{Department of Mathematics and Statistics\\
University of Victoria \\
Victoria, BC\\
Canada, V8W 3R4} \email{aquas@uvic.ca}

\keywords{graphs, random geometric graphs, infinite dimensional
normed spaces, sequence spaces}
\thanks{The authors gratefully acknowledge support from NSERC and Ryerson University}
\subjclass{05C63, 05C80, 54E35, 46B04}

\begin{abstract}
We consider a random geometric graph model, where pairs of vertices are points in a metric space and edges are formed independently with fixed probability $p$ between pairs within threshold
distance $\delta $. A countable dense set in a metric space is {\sl Rado} if this random model gives, with probability 1, a graph that is unique up to isomorphism. In earlier work, the first two
authors proved that, in finite dimensional spaces $\mathbb{R}^n$ equipped with the $\ell_{\infty}$ norm, all countable dense set satisfying a mild non-integrality condition are Rado. In this paper
we extend this result to infinite-dimensional spaces. If the underlying metric space is a separable Banach space, then we show in some cases that we can almost surely recover the Banach space from
such a geometric random graph. More precisely, we show that in the sequence spaces $c$ and $c_0$, for measures $\mu$ satisfying certain conditions, $\mu^\N$-almost all countable sets are
Rado. Moreover, with probability 1, in $c$ as in $c_0$, all graphs obtained from the random geometric model with a randomly chosen dense countable vertex set are isomorphic to each other. Finally, we show that representatives of the isomorphism classes obtained in this way  from $c$ and $c_0$  are non-isomorphic to each other, and also non-isomorphic to their counterparts obtained from finite dimensional spaces.

\end{abstract}
\maketitle

\section{Introduction}

The well known binomial random graph is a probability space
$G(\mathbb{N},p)$ that consists of graphs with vertices
$\mathbb{N}$, so that each distinct pair of integers is adjacent independently with
a fixed probability $ p\in (0,1)$. Over fifty years ago, Erd\H{o}s
and R\'{e}nyi \cite{er} discovered that with probability $1$, a graph
$G\in G(\mathbb{N},p)$ is isomorphic to a particular graph, the Rado graph $R$. Further, the almost sure
isomorphism class does not depend on $p$ for any fixed $0<p<1$.
A graph $G$ is \emph{existentially closed} (or \emph{e.c.}) if for
all finite disjoint sets of vertices $A$ and $B$ (one of which may
be empty), there is a vertex $z\notin A\cup B$ adjacent to all
vertices of $A$ and to no vertex of $B$. We say that $z$ is
\emph{correctly joined} to $A$ and $B$. The unique isomorphism type of a
countably infinite e.c.\ graph is named the \emph{infinite random graph},
or the \emph{Rado} graph, and is written $R$. See Chapter 6 of
\cite{bonato} and the surveys \cite{cam,cam1} for additional
background on $R$.

Isomorphism types distinct from $R$ but arising from infinite random geometric graphs
were first considered in \cite{bon2}. Consider a normed space $X$ with norm $\|\cdot\|$,
a positive real number $\delta$, a countable subset $V$ of $X$,
and $p\in (0,1)$. The {\sl Local Area Random Graph
$\mathrm{LARG}(V,\delta,p)$} has vertices $V$, and for each
pair of vertices $u$ and
$v$ with $\|u-v\|<\delta$, an edge is added independently with
probability $p$. In other words, we consider a random,
constant-radius disk model on a subset of a metric space.
The LARG model
generalizes some well-known classes of random graphs. For example,
special cases of the $\mathrm{LARG}$ model include the 
random geometric graphs (where the space is Euclidean, the vertex set is chosen uniformly at random from a bounded subspace, and $p=1$), and the finite binomial random graph
$G(n,p)$ (where the base set is a metric space of finite diameter,
$D$, and $\delta> D$). Since $V$ is required to be countable and
we will focus on the case where $V$ is dense in $X$, we require $X$
to be separable.

A general question is the classification
of sets and metric spaces for which the LARG model, like the
random graph model, leads to a unique isomorphism type. The following
notation from \cite{bol1} is helpful. A countable dense set $V$
in a normed space $X$ is \emph{Rado} if for all $\delta >0$ and
$p\in (0,1)$, with probability $1$, $\mathrm{LARG}(V,\delta,p)$
generates a unique isomorphism type of graph. The set $V$ is
\emph{strongly non-Rado} if any two such graphs are,
with probability $1$, not isomorphic. A fundamental question
when studying graphs
generated by the LARG model is to determine which sets are Rado
or strongly non-Rado.

For a real number $1\le p \le \infty$ and $d\ge 1$ an integer,
the vector space $\mathbb{R}^d$ of dimension $d$ equipped with
the metric derived from the $p$-norm is denoted by $\ell_{p}^d$. If $p
=\infty$, then in \cite{bon2} it was shown that almost all countable
dense sets are Rado (here and for the rest of this section,
``almost all'' refers to a suitable measure constructed in the paper).
The proof used a back-and-forth argument, coupled with
a geometric analogue of the e.c.\ property (see the next section).
The unique countable limits in the $d$-dimensional case were named
$GR_d$; for a fixed $d$, these graphs are all isomorphic regardless
of the choice of $\delta$ or $p$. In contrast, it was also shown in
\cite{bon2} that if $p=2$, there are sets in $\ell_{p}^2$ which
are strongly non-Rado. The latter result was significantly
generalized in \cite{bol1}, which proved, using tools from
functional analysis, that if $X$ is a finite-dimensional normed
space not isometric
to $\ell_{\infty}^d$, then almost every random dense set $V$ is strongly non-Rado.

A question posed at the end of \cite{bol1} was to classify the Rado sets
in the infinite dimensional case. As discussed in \cite{bona}, the existence of Rado sets in sequence spaces is open and this problem is settled in the current paper.

Let $c$ be the space of all convergent, real sequences equipped with the $\ell _{\infty }$ norm. Let $c_0$ denote the subspace of $c$ consisting of sequences converging to 0. It is well known that
$c$ and $c_0$ are separable normed spaces. We prove in the following two theorems that $\mu^\N$-almost all countable dense sets in the space of convergent sequences $c$, and $\mu_0^\N$-almost all
countable subsets of the subspace $c_0$ (consisting of sequences in $c$ that converge to 0) are Rado, where $\mu$ and $\mu_0$ are measures on $c$ and $c_0$, respectively, satisfying certain natural
conditions.

The proofs of the following theorems are
deferred to Sections~\ref{csec} and \ref{c0sec}, respectively. See Section~\ref{randomsec} for the definitions of non-aligned, and fully supported measures, but we comment here that
any reasonable fully supported measure $\mu$ on $c$ and any reasonable fully supported product measure $\mu_0$ on $c_0$ has the non-aligned property. Briefly, a measure is non-aligned if each affine coordinate hyperplane is a null set, and by a product measure, we mean one
where the coordinates are mutually independent (but not identically distributed).

\begin{theorem}\label{mainone}
There exists a graph, $GR(c)$ such that, for every non-aligned fully supported measure $\mu$ on $c$,
for $\mu^\N$-almost every countable subset $V$ of $c$, for each $p\in(0,1)$,
$\LARG(V,1,p)$ is almost surely isomorphic to $GR(c)$.
\end{theorem}

\begin{theorem}\label{maintwo}
There exists a graph, $GR(c_0)$, such that for every non-aligned fully supported measure $\mu$ of product type on $c_0$,
for $\mu^\N$-almost every countable set $V$, for each $p\in(0,1)$, $\LARG(V,1,p)$ is almost surely
isomorphic to $GR(c_0)$.
\end{theorem}

The above theorems (as well as those of the previous paper \cite{bon2}) show that if $X$
in one of the Banach spaces $\ell_\infty^d$, $c$ or $c_0$, there exists a local Rado
graph $GR(X)$ such that for many countable dense subsets $V$ of $X$, and any
$p\in(0,1)$, $\LARG(V,1,p)$ is almost surely isomorphic to $GR(X)$. The following
theorem, proved in section \ref{sec:noniso}, shows that these local 
Rado graphs are mutually non-isomorphic: one can recover the underlying Banach space
from the local Rado graph.

\begin{theorem}\label{mainthree}
Suppose that $X$ and $Y$ are real Banach spaces with local Rado graphs 
$GR(X)$ and $GR(Y)$. If $GR(X)$ is isomorphic to $GR(Y)$, then
$X$ is isometrically equivalent to $Y$.
\end{theorem}

Throughout, all graphs considered are simple, undirected, and countable unless
otherwise stated. We will encounter two distinct notions of distance: metric distance
(derived from a given norm) and
graph distance. In a normed space, we write $\|u-v\|$ for the metric
distance of the points. For a graph $G$, we write $d_G(u,v)$ for the graph distance.
Given a normed space $X$, denote the (open)
\emph{ball of radius $\delta$ around $x$} (or $\delta$-\emph{ball}) by
\[
B_{\delta }(x)=\{u\in X:\|u-x\|<\delta \}.
\]
In large parts of the paper, we will take $\delta=1$.
In that case, we will use $B(x)$ instead of $B_1(x)$. A subset $V$ is \emph{dense}
in $X$ if for every point $x\in X$, every ball around $x$ contains at least one
point from $V$. We use the notation $\lfloor t \rfloor$ for the
integer part of $t\in\R$, and let $\fracpart{t} = t- \lfloor t \rfloor $.

For $a\in\R$, we write $c_a$ for the subset of $c$ consisting of sequences converging to $a$.
If $x$ is a point of $c$, then we write $(x_n)_{n\in\N}$ for the terms of the sequence defining $x$. We
use the notation $x_\infty$ to denote $\lim_{n\rightarrow \infty }x_{n}$.
We use the notation $x\sim_G y$
to denote that $x$ and $y$ are
adjacent in $G$. If it is clear from the context which graph is meant, then we omit the
subscript and write $x\sim y$.

For a reference on graph theory the reader is directed to \cite{diestel,west},
while \cite{bryant} is a reference on normed spaces. 

\section{Geometrically Existentially Closed graphs} \label{geo}
Let $G=(V,E)$ be a graph whose vertices are points in the normed space $X$.
The graph $G$ is \emph{geometrically existentially closed}
\emph{at level $\delta$} (or $\delta$-\emph{\gec}) if for all
$\delta'$ so that $0<\delta'<\delta$, for all $x\in V$, and for all disjoint finite sets
$A$ and $B$ so that $A\cup B\subseteq B_\delta(x)$, there exists a vertex
$z\not\in A\cup B\cup\{x\}$ so that
\begin{itemize}
\item[(i)] $z$ is correctly joined to $A$ and $B$,
\item[(ii)] \hspace{0.01in} $A\cup B \subseteq B_{\delta}(z)$,
\item[(iii)] \hspace{0.03in} $\|x-z\|<\delta'$.
\end{itemize}
This definition implies that $V$ is dense in itself, since we may
choose $A$ and $B$ to be empty. Further, if $G$ is $\delta$-\gec,
then $G$ is $\delta'$-\gec\ for any $\delta'<\delta$. If $V$ is
dense in a normed space, then by scaling, without loss of generality,
we may assume that $\delta = 1$, and we refer to $1$-\gec\ simply as \gec

The \gec~property bears clear similarities with the e.c.\ property defined in the
introduction. The important differences are that a correctly joined vertex must
exist only for sets $A$ and $B$
which are contained in an open ball with radius $\delta$ and centre $x$, and it must
be possible to choose the vertex $z$ correctly joined to $A$ and $B$ arbitrarily
close to $x$; see Figure~1.

\begin{figure}[h]
\begin{center}
\epsfig{figure=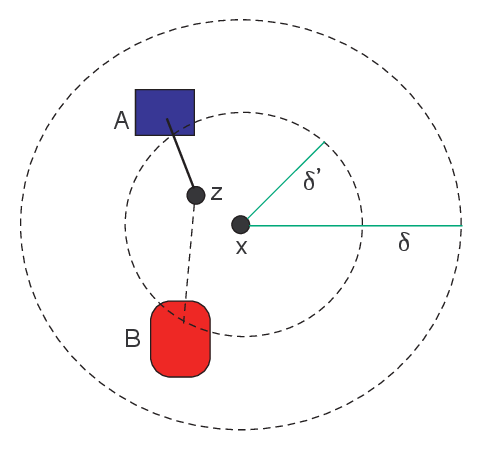,scale=0.75} \caption{The $\delta$-\gec\ property.}
\end{center}
\end{figure}

The following result demonstrates that graphs generated by the LARG model are
almost surely \gec\ Note that this result applies broadly to \emph{all} normed spaces,
including $c$ and $c_0$.

\begin{theorem}[\cite{bon2}]\label{random}
Let $(X,\|\cdot\|)$ be a normed space and $V$ a countable dense subset of $X$.
If $p\in (0,1)$, then with probability $1$, $\mathrm{LARG}(V,1,p)$ is \gec
\end{theorem}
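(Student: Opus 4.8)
The plan is to fix a finite configuration to be corrected and show that, with probability one, infinitely many candidate vertices in $V$ are geometrically well-placed, and that among these at least one gets joined the right way. So suppose we are given $\delta' \in (0,1)$, a point $x \in V$, and disjoint finite sets $A, B \subseteq B_1(x) \cap V$. The first step is purely geometric: I want to produce infinitely many points $z \in V$ that simultaneously satisfy $\|x - z\| < \delta'$ and $A \cup B \subseteq B_1(z)$. To do this, note that $A \cup B$ is a finite set each of whose points lies strictly inside $B_1(x)$, so there is some $\eta > 0$ with $\|a - x\| < 1 - \eta$ for every $a \in A \cup B$. Then any $z$ with $\|x - z\| < \min(\delta', \eta)$ automatically has $\|a - z\| \le \|a - x\| + \|x - z\| < 1$ for all $a \in A \cup B$, hence $A \cup B \subseteq B_1(z)$. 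Since $V$ is dense in $X$, the ball $B_{\min(\delta',\eta)}(x)$ contains infinitely many points of $V$; discarding the finitely many points in $A \cup B \cup \{x\}$ still leaves infinitely many. Call this infinite set $Z = \{z_1, z_2, \dots\}$.

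The second step is the probabilistic one. For each $z_i \in Z$, since $A \cup B \subseteq B_1(z_i)$, every pair $\{z_i, a\}$ with $a \in A$ and every pair $\{z_i, b\}$ with $b \in B$ is a pair at distance less than $1$, so in the LARG model each such pair independently receives an edge with probability $p$. Thus the event $E_i$ that $z_i$ is correctly joined to $A$ and $B$ — adjacent to every vertex of $A$, non-adjacent to every vertex of $B$ — has probability $p^{|A|}(1-p)^{|B|} =: q > 0$, which does not depend on $i$. The events $E_1, E_2, \dots$ need not be mutually independent, because two distinct $z_i, z_j$ might be at distance less than $1$ from each other, but crucially each $E_i$ depends only on the edges between $z_i$ and $A \cup B$, and these edge sets are pairwise disjoint as $i$ varies. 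Hence the $E_i$ are genuinely independent events, each of probability $q > 0$, so by Borel--Cantelli (the second, easy direction) infinitely many of them occur with probability $1$; in particular at least one $z_i$ is correctly joined to $A$ and $B$, and that $z_i$ already satisfies conditions (i), (ii), (iii) of the \gec\ definition.

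The final step is to remove the dependence on the fixed configuration. There are only countably many triples $(x, A, B)$ with $x \in V$ and $A, B$ finite disjoint subsets of $V$, and for the rational values of $\delta'$ it suffices to check a countable dense set (the definition quantifies over all $\delta' < 1$, but if $z$ works for a $\delta'$ it works for every larger one, so checking a sequence $\delta' \to 1$ suffices, and in any case each individual requirement ``$A \cup B \subseteq B_1(x)$ admits a correctly joined $z$ with $\|x-z\|<\delta'$'' is handled by the argument above for that specific $\delta'$). A countable union of probability-zero events is still probability zero, so with probability $1$ every such requirement is met simultaneously, which is exactly the statement that $\mathrm{LARG}(V,1,p)$ is \gec.

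I do not expect a serious obstacle here; the one point that needs a little care is the independence claim in the second step — one must phrase it in terms of the disjointness of the relevant blocks of edge-indicator random variables rather than naively asserting independence of the vertices $z_i$, since the $z_i$ can be close to one another. Everything else is routine topology of normed spaces (density, the triangle inequality) plus the second Borel--Cantelli lemma and countable subadditivity.
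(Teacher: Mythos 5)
The paper does not actually prove this statement --- it is imported from \cite{bon2} --- but your argument is the standard one and, up to one small slip, it is correct: reduce to a fixed configuration $(x,A,B,\delta')$, use density and the triangle inequality to manufacture infinitely many candidates $z_i$ with $\|x-z_i\|<\delta'$ and $A\cup B\subseteq B_1(z_i)$, note that the events ``$z_i$ is correctly joined to $A$ and $B$'' each have probability $p^{|A|}(1-p)^{|B|}>0$ and are mutually independent because they are determined by pairwise disjoint blocks of edge indicators, and finish with the second Borel--Cantelli lemma and a countable union over configurations. The independence step is indeed the one place needing care, and your justification via disjointness of the relevant edge sets (rather than a naive appeal to the $z_i$ being ``far apart'') is the right one.

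The one slip is in the countability reduction for $\delta'$: a witness $z$ for a given $\delta'$ remains a witness for every \emph{larger} value, so the uncountably many $\delta'\in(0,1)$ are covered by a sequence of values tending to $0$, not to $1$ as you wrote. Your fallback of checking all rational $\delta'\in(0,1)$ does work, but it works because the rationals accumulate at $0$, not because of monotonicity toward $1$. This is a one-word fix and does not affect the structure or validity of the proof.
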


Theorem~\ref{random} gives us a tool to prove that a set is Rado.
In particular, we use a back-and-forth argument to show that
\gec\ graphs on the set are isomorphic.

The \emph{length} of an edge $uv$ is the distance between its endpoints, i.e.\ $\| u-v\|$.
A countable graph that is \gec\ and is such that all its edges have length less than $\delta$, 
is called a \emph{geometric $\delta$-graph}.
By definition, a graph $G$ generated by $\mathrm{LARG}(V,1,p)$
has no edges of length more than $1$, and, if $V$ is countable and dense in $X$, then $G$
is almost surely \gec\ Thus, this random graph model generates geometric $1$-graphs.

The following important theorem
demonstrates that there exists a close
relationship between graph distance and metric distance in any graph that
is a geometric $1$-graph.

\begin{theorem}[\cite{bon2}]
\label{mot} Let $(X,\|\cdot\|)$ be a normed space and $G=(V,E)$ be
a geometric $1$-graph with $V\subseteq X$. Suppose that
$\overline{V}$ is convex. For any $u,v\in V$ so that $\|u-v\|\ge 1$, we have that
$$
d_G(u,v) = \lfloor \|u-v\|\rfloor +1.
$$
If $\|u-v\|<1$, then
$$
d_G(u,v) = \begin{cases}
0&\text{if $u=v$;}\\
1&\text{if $u\sim v$;}\\
2&\text{otherwise.}
\end{cases}
$$
\end{theorem}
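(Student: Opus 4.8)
The plan is to prove the two displayed formulas by showing the matching upper and lower bounds on $d_G(u,v)$ separately, exploiting the \gec\ property to build short paths and the threshold condition together with the triangle inequality to block short paths. Throughout, write $k = \lfloor \|u-v\| \rfloor$, so that $k \le \|u-v\| < k+1$, and we aim to show $d_G(u,v) = k+1$ when $\|u-v\| \ge 1$.

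For the \emph{upper bound}, I would construct a walk of length $k+1$ from $u$ to $v$. The idea is to place $k$ intermediate points approximately along the segment from $u$ to $v$: using convexity of $\overline V$, the points $u + \tfrac{j}{k+1}(v-u)$ for $j = 1, \dots, k$ lie in $\overline V$, and consecutive points (including the endpoints) are at distance $\tfrac{\|u-v\|}{k+1} < 1$ from each other. Since $V$ is dense in $\overline V$, I can perturb these points slightly to get genuine vertices $w_1, \dots, w_k \in V$ with all $k+1$ consecutive distances still strictly below $1$; but density alone does not guarantee the perturbed vertices are \emph{adjacent}, only within threshold. This is where \gec\ enters: one inserts the vertices one at a time, at each stage using the \gec\ property (with $A$ the relevant already-placed neighbour, $B = \emptyset$, and $\delta'$ chosen small enough to keep the distances under $1$) to obtain a vertex correctly joined to its intended neighbour and close to the target location. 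Carrying this out carefully yields a path $u \sim w_1 \sim \cdots \sim w_k \sim v$, so $d_G(u,v) \le k+1$. For the sub-case $\|u-v\| < 1$ with $u \ne v$ and $u \not\sim v$, the same \gec\ argument with a single inserted vertex gives $d_G(u,v) \le 2$.

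For the \emph{lower bound}, I would use the threshold property: any path $u = z_0 \sim z_1 \sim \cdots \sim z_m = v$ has all consecutive distances $< 1$, so by the triangle inequality $\|u - v\| < m$, forcing $m > \|u-v\| \ge k$, hence $m \ge k+1$. This shows $d_G(u,v) \ge k+1$ when $\|u-v\| \ge 1$, matching the upper bound. When $\|u-v\| < 1$: if $u = v$ then $d_G = 0$ trivially; if $u \sim v$ then $d_G = 1$; and if $u \ne v$, $u \not\sim v$, then $d_G \ge 2$ by definition of adjacency, matching the upper bound of $2$. Combining the two bounds in each case gives the stated formula.

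The main obstacle is the upper-bound construction: one must check that the iterative \gec\ insertions can be performed so that \emph{every} consecutive pair in the final path — not just the newly created edges — stays within threshold distance $1$, and that the accumulated perturbations do not push any distance up to or past $1$. This requires choosing the target interpolation points with a definite margin (e.g. consecutive target distances equal to $\tfrac{\|u-v\|}{k+1}$, which is strictly less than $1$ with room to spare) and then choosing each $\delta'$ in the \gec\ property small relative to that margin and to the number of remaining insertions. A clean way to organize this is to fix all $k$ target points first, let $\eta > 0$ be half the gap between $1$ and the largest consecutive target distance, and insist every inserted vertex lie within $\eta/(k+1)$ of its target; then the triangle inequality guarantees all realized consecutive distances are below $1$. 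The rest is bookkeeping, and the $\|u-v\| < 1$ cases are immediate specializations.
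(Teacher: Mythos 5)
Your proposal is correct and follows what is essentially the standard argument for this result (the paper itself gives no proof, quoting the theorem from \cite{bon2}, where the argument is the same): threshold plus the triangle inequality for the lower bound, and convexity of $\overline{V}$, density of $V$ in $\overline{V}$, and iterated \gec\ insertions along the segment from $u$ to $v$ for the upper bound. The only detail worth making explicit is that at the final insertion the set $A$ must contain both $w_{k-1}$ and $v$ (and, in the case $\|u-v\|<1$ with $u\not\sim v$, one applies \gec\ once with $A=\{u,v\}$), so that the constructed walk actually closes up at $v$ with a genuine edge.
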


Theorem~\ref{mot} directly leads to the following corollary.

\begin{corollary}[\cite{bon2}]\label{useful}
  If $\overline{V}$ and $\overline{W}$ are convex, and there are geometric
  1-graphs with vertex sets $V$ and $W$, respectively,
  which are isomorphic via $f$, then for
  every pair of vertices $u,v\in V$,
  $$
\lfloor \|u-v\|\rfloor=\lfloor \|f(u)-f(v)\|\rfloor.
$$
\end{corollary}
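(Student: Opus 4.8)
The plan is to derive this directly from Theorem~\ref{mot} applied on both sides of the isomorphism. Since $\overline{V}$ is convex and the graph on $V$ is a geometric $1$-graph, Theorem~\ref{mot} tells us that $d_G(u,v)$ is determined by $\lfloor\|u-v\|\rfloor$ (via $d_G(u,v)=\lfloor\|u-v\|\rfloor+1$ when $\|u-v\|\ge 1$, and via the three-case formula when $\|u-v\|<1$). The same applies on the $W$ side, since $\overline{W}$ is convex and the graph on $W$ is also a geometric $1$-graph. Because $f$ is a graph isomorphism, it preserves graph distance: $d_G(u,v)=d_H(f(u),f(v))$ for all $u,v\in V$, where $H$ denotes the graph on $W$.

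First I would handle the case $\|u-v\|\ge 1$. Here Theorem~\ref{mot} gives $d_G(u,v)=\lfloor\|u-v\|\rfloor+1\ge 2$, so $d_H(f(u),f(v))\ge 2$ as well. If we had $\|f(u)-f(v)\|<1$, then the three-case formula would force $d_H(f(u),f(v))\le 2$, hence exactly $2$; but then, reading Theorem~\ref{mot} in the other direction, $d_G(u,v)=2$ would force $\|u-v\|<1$, a contradiction. So $\|f(u)-f(v)\|\ge 1$, and then $d_H(f(u),f(v))=\lfloor\|f(u)-f(v)\|\rfloor+1$. Equating the two distance expressions yields $\lfloor\|u-v\|\rfloor=\lfloor\|f(u)-f(v)\|\rfloor$.

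Next I would handle the case $\|u-v\|<1$. Then $\lfloor\|u-v\|\rfloor=0$, and Theorem~\ref{mot} gives $d_G(u,v)\in\{0,1,2\}$, hence $d_H(f(u),f(v))\le 2$. By the first case (applied with the roles of $V$ and $W$, or $f$ and $f^{-1}$, reversed — which is legitimate since $f^{-1}$ is also an isomorphism and $\overline{W},\overline{V}$ are both convex), if $\|f(u)-f(v)\|\ge 1$ we would get $d_H(f(u),f(v))=\lfloor\|f(u)-f(v)\|\rfloor+1\ge 2$, and the equality case $d_H(f(u),f(v))=2$ would again contradict $\|u-v\|<1$ as above. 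Hence $\|f(u)-f(v)\|<1$, so $\lfloor\|f(u)-f(v)\|\rfloor=0=\lfloor\|u-v\|\rfloor$. Combining the two cases gives the claim for every pair $u,v\in V$.

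The only mild subtlety — and the step I would be most careful about — is the bookkeeping around graph distance exactly equal to $2$, since that value is the one place where the metric distance is not sharply pinned down (it corresponds to $\|u-v\|<1$ with $u\not\sim v$, yet also appears to be $\le$ the value $\lfloor\|u-v\|\rfloor+1$ when $\|u-v\|$ is slightly above $1$; one must check $\lfloor\|u-v\|\rfloor+1=2$ forces $1\le\|u-v\|<2$, and cross-reference the $<1$ branch to exclude overlap). Once one verifies that $d_G=2$ is consistent only with $\|u-v\|<1$ under the geometric $1$-graph hypothesis, the argument closes cleanly, and no computation beyond reading off Theorem~\ref{mot} in both directions is needed.
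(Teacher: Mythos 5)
Your reduction to Theorem~\ref{mot} is the natural route (and the one the paper itself gestures at: the corollary is quoted from \cite{bon2} and introduced as following ``directly'' from that theorem, with no proof given). However, your argument has a genuine gap at exactly the point you flag as a ``mild subtlety'': it is \emph{not} true that $d_G(u,v)=2$ is consistent only with $\|u-v\|<1$. Theorem~\ref{mot} itself asserts that $d_G(u,v)=\lfloor\|u-v\|\rfloor+1=2$ whenever $1\le\|u-v\|<2$, \emph{and} that $d_G(u,v)=2$ whenever $\|u-v\|<1$ with $u\ne v$ and $u\not\sim v$; so the value $2$ of the graph distance does not determine whether the floor is $0$ or $1$. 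Both of your cases collapse in the same place. In Case 1, if $1\le\|u-v\|<2$ and $\|f(u)-f(v)\|<1$ with $f(u)\not\sim f(v)$, then $d_G(u,v)=d_H(f(u),f(v))=2$ is perfectly consistent with Theorem~\ref{mot} on both sides, yet the floors are $1$ and $0$; the step ``$d_G(u,v)=2$ would force $\|u-v\|<1$'' is simply a false converse of the theorem. Case 2 fails in the mirror-image way. The rest of your write-up is sound: adjacency forces both floors to be $0$ via the threshold condition, and for $d_G(u,v)\ge 3$ the formula $d=\lfloor\|\cdot\|\rfloor+1$ applies on both sides and the floors agree. The sole unresolved case is $d_G(u,v)=2$, and it cannot be resolved by looking only at the pair $(u,v)$.

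Closing the gap requires an auxiliary argument involving third vertices, which is presumably why the paper cites \cite{bon2} rather than proving the corollary. One direction that works: if $\|u-v\|<1$, then $|\,\|w-u\|-\|w-v\|\,|<1$ for every $w$, and Theorem~\ref{mot} converts this into $|d_G(w,u)-d_G(w,v)|\le 1$ for all vertices $w$; one then shows, using density, the convexity hypothesis on $\overline V$, and the \gec\ property, that when $1\le\|u-v\|<2$ there is a witness $w$ with $|d_G(w,u)-d_G(w,v)|=2$. Such a characterization is purely graph-theoretic, hence preserved by $f$, and it disambiguates the two distance-$2$ situations. As written, though, your proof does not establish the statement.
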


Corollary~\ref{useful} suggests the following generalization of isometry. Given metric
spaces $(S,d_S)$ and $(T,d_T)$, sets $V\subseteq S$ and $W\subseteq T$,  a
\emph{step-isometry} from $V$ to $W$
is a bijective map $f:V\rightarrow W$ with the property that for every
pair of vertices $u,v\in V$,
\[
\lfloor d_S(u,v)\rfloor=\lfloor d_T(f(u),f(v))\rfloor.
\]
Every isometry is a step-isometry, but the converse is false, in general.

\section{Random infinite sets in $c$ and $c_0$}\label{randomsec}

We will show that sets that are dense in $c$ or $c_a$ are
Rado if they satisfy certain mild properties, which we now define.
A subset $A\subseteq c$ is \emph{coordinate-wise integer distance free} or
\emph{\idf} if for all $n\in\N\cup\{\infty\}$, $x_n\not\in\Z$ for all
$x\in A$ and $x_n-y_n\not\in\Z$ for all distinct $x,y\in A$
(where $x_\infty$ denotes $\lim_{n\to\infty}x_n$ as mentioned above).
A subset $A\subseteq c_a$ is \emph{\idf}  if
for all $n\in\N$, $x_n\not\in\Z$ for all $x\in A$, and
$x_n-y_n\not\in\Z$ for all distinct $x,y\in A$. 
Note that if a set is \idf\ in $c$, it is \idf\ in $c_a$, but the converse is false, since all sequences in $c_a$ have the same limit.

When dealing with $c_a$, we need an additional second property (as will be explained in Section \ref{c0sec}.) A countable subset of $c_a$ $\{x^{(1)},x^{(2)},\ldots\}$ satisfying the \idf\ condition is said to have the \emph{independent order
property} (or \iop) if for any finite sub-collection of the points, $x^{(i_1)},\ldots,x^{(i_k)}$, and any finite collection of linear orders $\prec_1,\ldots,\prec_N$ of $\{1,\ldots,k\}$, there exist
distinct positions $j_1,\ldots,j_N\in\N$ such that for all $\ell \in \{ 1, 2, \cdots N\},$
\[
m\prec_\ell n \text{ if and only if }\fracpart{x^{(i_m)}_{j_\ell}}<
\fracpart{x^{(i_n)}_{j_\ell}}.
\]

Let $(\mu_n)$ be a sequence of probability measures on $\R$, each continuous and fully
supported. We construct a random element, $x$, of $\R^\N$ by independently
sampling each coordinate $x_n$ from the measure $\mu_n$. Providing we impose suitable
constraints on the $\mu_n$ (for example their distribution is symmetric around 0
and their variances are
summable), the resulting point, $x$, is almost surely in $c_0$. To obtain a random point
in $c$, we can just add the value of an independently sampled single continuous fully
supported random variable taking values in $\R$ to each coordinate of $x$.

\begin{theorem}
If $\mu$ is a measure on $X$ (one of $c$ or $c_0$) as described above, 
then $\mu^\N$-a.e. of $X^\N$ has the \idf\ and \iop\ properties.
\end{theorem}

The proof, given for specific constructions of suitable $\mu$, is contained in the appendix.

\section{Proof of Theorem~\ref{mainone}: almost all sets in $c$ are Rado}\label{csec}

We will show isomorphism of geometric 1-graphs in the spaces we consider
by inductively constructing an isomorphism. By Corollary \ref{useful}, we know that any isomorphism must be a step isometry. Thus, in the inductive construction, we need to make sure that
the constructed map is a step-isometry. The
following lemma gives sufficient conditions for a map on $c$ to be a step-isometry.

\begin{lemma}\label{step}
Let $A,B$ be \idf\ 
subsets of $c$, and let $f\colon A\to B$ be a bijection.
If the following conditions are satisfied, then $f$ is a step-isometry:
\begin{enumerate}
\item $\lfloor f(x)_i\rfloor = \lfloor x_i\rfloor$ for each $x\in A$ and each $i\in\N$;
\item For each $i\in\N$ and $x,y\in A$, we have that $\fracpart{f(x)_i} < \fracpart{f(y)_i}$ if and only if
$\fracpart{x_i}<\fracpart{y_i}$.

\end{enumerate}
\end{lemma}

\begin{proof}
Let $A$ be a countable \idf\ 
subset of $c$. Notice that if
$x>y$ and $\fracpart x\ne\fracpart y$, then
$$\lfloor |x-y|\rfloor =
\begin{cases}
\lfloor x\rfloor -\lfloor y\rfloor&\text{if $\fracpart x> \fracpart y$;}\\
\lfloor x\rfloor -\lfloor y\rfloor-1&\text{if $\fracpart x<\fracpart y$.}
\end{cases}
$$
Clearly, analogous statements hold if $y<x$ by swapping the roles of $x$ and $y$.

Suppose that $f\colon A\to B$ is a bijection satisfying the hypotheses (1) and (2). If $x,y\in A$, then
notice that $f(x)_i<f(y)_i$ if and only if $x_i<y_i$, and that $\fracpart{f(x)_i}
<\fracpart{f(y)_i}$ if and only if $\fracpart{x_i}<\fracpart{y_i}$,
so that $\lfloor |x_i-y_i|\rfloor=
\lfloor |f(x)_i-f(y)_i|\rfloor$. 

Note that, if we consider $x\in c$ such that $x_\infty\not\in \Z$,
then it is evident that there is an $M$ such that for
all $i\geq M$, $\lfloor x_{i}\rfloor=\lfloor x_{\infty }\rfloor$. Since $A$ and $B$ satisfy the \idf\ property, the limits of $x$ and $y$ are not integers, and thus
$\lfloor\|x-y\|\rfloor =\max_i \lfloor |x_i-y_i|\rfloor$ and
$\lfloor\|f(x)-f(y)\|\rfloor = \max_i \lfloor |f(x)_i-f(y)_i|\rfloor$, so that
$\lfloor \|f(x)-f(y)\|\rfloor = \lfloor\|x-y\|\rfloor$. \end{proof}

The conditions can be loosely stated as follows. For each $i\in\N$, conditions (1) and (2) state that the  $i$-th coordinate of the sequences have the same integer part, and their remainders are similarly ordered. This implies that the induced coordinate map which maps the $i$-th coordinate of every function in $A$ to the $i$-th coordinate of the image of the function, is a step isometry. Since we are working with the $\ell^\infty$ norm, this immediately implies that $f$ is a step isometry, but the condition is indeed stronger.

The proof of Theorem \ref{mainone} follows directly from the following theorem, which states that all \gec\ geometric 1-graphs obtained from \idf\ sets in $c$ are isomorphic. The unique isomorphism type is the graph $GR(c)$ mentioned in Theorem \ref{mainone}. 

\begin{theorem}\label{main}
Let $V$ and $W$ be dense, countable \idf\ sets in $c$.
If $G$ and $H$ are \gec\ geometric
1-graphs with vertex sets $V$ and $W$, respectively, then $G\cong H$.
\end{theorem}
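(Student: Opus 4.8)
The plan is to construct an isomorphism $f\colon V\to W$ by a back-and-forth argument, building it up as an increasing union of finite partial maps. At each finite stage we will have a bijection $f_k\colon \{v_1,\dots,v_k\}\to\{w_1,\dots,w_k\}$ between finite subsets of $V$ and $W$, and we maintain as an invariant that $f_k$ satisfies, on its finite domain, the two hypotheses of Lemma~\ref{step}: namely $\lfloor f_k(x)_i\rfloor=\lfloor x_i\rfloor$ for all $i$, and $\fracpart{f_k(x)_i}<\fracpart{f_k(y)_i}$ iff $\fracpart{x_i}<\fracpart{y_i}$ for all $i$ and all $x,y$ in the domain. (By the \idf\ assumption, the stronger conclusion of Lemma~\ref{step}, that $\lfloor\|x-y\|\rfloor=\lfloor\|f_k(x)-f_k(y)\|\rfloor$, then holds automatically, and in particular $x\sim_G y$ forces the relevant metric distances to lie in the same integer window.) Enumerate $V=\{v_1,v_2,\dots\}$ and $W=\{w_1,w_2,\dots\}$; on odd steps we adjoin the least-index unused vertex of $V$ to the domain and find its image, on even steps we adjoin the least-index unused vertex of $W$ to the range and find its preimage, thereby guaranteeing surjectivity and injectivity in the limit. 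The fact that $f$ preserves adjacency then follows because both $G$ and $H$ are geometric $1$-graphs and $f$ is a step-isometry, using Theorem~\ref{mot}/Corollary~\ref{useful} to tie the integer part of the norm to graph distance — though one must be a little careful that step-isometry alone does not obviously preserve the edge relation when $\|x-y\|<1$, so the extension step must additionally preserve adjacency to previously placed vertices directly, which the \gec\ property will let us do.

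The core of the argument is the extension step: given the partial step-isometry $f_k$ on $D=\{v_1,\dots,v_k\}$ with image $R=\{w_1,\dots,w_k\}$, and a new point $v=v_{k+1}\in V$, we must find $w\in W$ with $w\sim_H f_k(v_i)$ exactly when $v\sim_G v_i$, with $\lfloor w_i\rfloor=\lfloor v_i\rfloor$ for every coordinate $i$, and with the fractional-part orderings of $w$ against each $f_k(v_i)$ matching those of $v$ against $v_i$ in every coordinate. The strategy is to locate a point $x^\ast\in W$ (a ``target'') that has exactly the right coordinate profile — same integer parts as $v$ in every coordinate and same fractional-part order relations against the elements of $R$ — and then use the \gec\ property of $H$ to produce a vertex $w$ of $H$ very close to $x^\ast$ that is correctly joined to the appropriate subsets of $R$. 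Here ``very close'' must mean: within a ball small enough that (a) $w$ has the same integer part as $x^\ast$ in each of the finitely many coordinates where $v$ differs in integer part from members of $R$, (b) $w$ still sits on the correct side of each $f_k(v_i)$ in fractional part in each relevant coordinate, and (c) the edge/non-edge pattern to $R$ is preserved. Since $A=B$ in the \gec\ definition are required to lie inside $B_1(x)$, and $R$ may contain points at distance $\geq 1$ from $x^\ast$, we partition $R$ into $R_{\text{near}}=R\cap B_1(x^\ast)$ and $R_{\text{far}}$; the \gec\ property directly controls joining to $R_{\text{near}}$, while for $R_{\text{far}}$ we argue that $\lfloor\|v-v_i\|\rfloor\geq 1$ forces (via Theorem~\ref{mot}) $d_G(v,v_i)\geq 2$, i.e. $v\not\sim v_i$, and symmetrically for $w$ once it is placed near $x^\ast$, so those non-edges are automatic and need not be engineered.

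The genuinely new content — and the main obstacle — is showing that a target point $x^\ast\in W$ with the required coordinate profile actually exists, and indeed exists arbitrarily close to any prescribed location (we will want it near a point of $V$ whose distances to $D$ mirror those of $v$, which exists by density of $V$, then transported to $W$; or more directly, we want it near the image under the ``ideal'' partial isometry). Because $W$ is only dense in $c$, not equal to $c$, we cannot simply write down $x^\ast$ coordinatewise. The idea is: the constraints on $x^\ast$ involve only finitely many coordinates (the coordinates where $v$'s integer part differs from that of some $v_j\in D$, together with the finitely many coordinates singled out to witness the fractional-part orderings needed) plus a control on the tail/limit coordinate so that $x^\ast_\infty$ has the correct integer part and correct fractional-part relations to the finitely many $(f_k(v_j))_\infty$. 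Each such finite collection of constraints defines a non-empty open subset of $c$ — here the \idf\ property of $V$ and $W$ is essential to ensure the constraints (strict inequalities on fractional parts, and integer parts lying in open integer windows) carve out an open, hence non-empty-interior, region, and one must check the limit-coordinate constraint is compatible, which is where the non-aligned/\idf\ hypotheses on the sets, giving $x_\infty\notin\Z$ and the stabilization remark before Lemma~\ref{step}, come in. Since $W$ is dense, it meets this open set, giving $x^\ast$; and one can further shrink the open set to force $x^\ast$ as close as desired to the nominal target so that the subsequent \gec\ invocation has room to work. I expect the bookkeeping of ``which finitely many coordinates matter, and why the remaining coordinates can be left free'' to be the delicate part, together with verifying that after the \gec\ perturbation from $x^\ast$ to $w$ all three families of constraints (integer parts everywhere, fractional-part orders everywhere, edges to $R_{\text{near}}$) survive simultaneously — this requires choosing the \gec\ radius $\delta'$ smaller than the finitely many relevant gaps at once, which is possible precisely because only finitely many coordinates impose a constraint and each gap is strictly positive by the \idf\ hypothesis.
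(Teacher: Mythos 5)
Your overall architecture is the same as the paper's: a back-and-forth construction maintaining the hypotheses of Lemma~\ref{step} as the inductive invariant, a coordinatewise ``target'' for the new image, density of $W$ to realize it, and a final \gec\ perturbation to repair adjacencies (with non-adjacencies to far-away vertices handled automatically via the threshold and the step-isometry). The gap is in the central claim that ``the constraints on $x^\ast$ involve only finitely many coordinates \dots\ plus a control on the tail/limit coordinate,'' and in the concluding justification that a single \gec\ radius works ``because only finitely many coordinates impose a constraint and each gap is strictly positive by the \idf\ hypothesis.'' In fact \emph{every} coordinate $i\in\N$ imposes a constraint on the new image: writing, for the new point $x$ and the already-placed points $z\in V_t$ with images $z'$,
$\ell_i=\max\big(\{0\}\cup\{\fracpart{z'_i}\colon \fracpart{z_i}<\fracpart{x_i}\}\big)$ and
$u_i=\min\big(\{1\}\cup\{\fracpart{z'_i}\colon \fracpart{z_i}>\fracpart{x_i}\}\big)$,
the image must satisfy $\lfloor x^\ast_i\rfloor=\lfloor x_i\rfloor$ and $\fracpart{x^\ast_i}\in(\ell_i,u_i)$ for \emph{all} $i$. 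This is an infinite conjunction of strict open conditions, and in the sup norm such a set need not have non-empty interior: each gap $u_i-\ell_i$ is positive, but positivity coordinate-by-coordinate does not give a positive infimum, and if $u_i-\ell_i\to 0$ no sup-norm ball fits inside the admissible region, so density of $W$ buys you nothing. This is exactly the point at which a naive transplant of the finite-dimensional argument fails.

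The missing ingredient --- the one genuinely infinite-dimensional step in the paper's proof --- is the Claim that the envelope sequences $(\ell_i)$ and $(u_i)$ lie in $c$ and converge to limits $\ell_\infty<u_\infty$. This uses the \idf\ hypothesis \emph{at the limit coordinate} (the values $\fracpart{z_\infty}$ for $z\in V_t\cup\{x\}$ are distinct and lie in $(0,1)$, so the orderings of the finite sets $F_i=\{\fracpart{z_i}\}\cup\{0,1,\fracpart{x_i}\}$ stabilize to that of $F_\infty$) together with the inductive preservation of fractional orderings at $i=\infty$ --- which is why the invariant must be carried at $\infty$ as well as at each finite coordinate. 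Convergence yields a uniform lower bound $2\alpha'>0$ on $u_i-\ell_i$ over all $i$, hence a genuine ball $B_{\alpha'}(y)$ about the midpoint target $y_i=\lfloor x_i\rfloor+\tfrac12(\ell_i+u_i)$ contained in the admissible region; only then can density and the \gec\ property be invoked. You flag this as delicate bookkeeping, but it is the essential content of the theorem rather than bookkeeping, and your reduction to ``finitely many coordinates'' skips it; note also that the decisive use of \idf\ here is at $n=\infty$, not the coordinatewise positivity of the gaps. The rest of your outline (the near/far split of $W_t$ for the \gec\ step, and the remark that step-isometry alone does not settle adjacency below distance $1$) is sound and matches the paper.
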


\begin{proof}
We use the usual technique for proving that two countably
infinite graphs are isomorphic: a back-and-forth argument.
That is, to show that two countably infinite graphs $G$ and $H$ are equivalent, we
inductively construct increasing sequences of finite subsets
$V_t\subseteq V$ and $W_t\subseteq W$, together with a sequence of functions $\varphi_t\colon
V_t\to W_t$ such that $\varphi_t$ is an extension of $\varphi_{t-1}$, and has the
property that for $x,y\in V_t$, $x$ and $y$ are adjacent in $G$ if and only if
$\varphi_t(x)$ and $\varphi_t(y)$ are adjacent in $H$.
Throughout the proof, if $x\in V$, then we denote by
$x'$ its image in $W$ (that is, if $x\in V_t$, then $x'=\phi_t(x)\in W_t$).

We just outline the forth part,
as the argument going back is analogous.
Let $V=\{ x^{(1)},x^{(2)},\dots \}$ and $W=\{ \xi^{(1)}, \xi^{(2)},\dots \}$.
The induction hypothesis requires not only that the partial maps constructed form an isomorphism between the relevant subgraphs, but also that the maps satisfy the coordinate-wise step isometry properties from Lemma \ref{step}. This will ensure that we can indeed invoke the \gec\ condition to extend the map to a new vertex. Namely, the partial maps are step isometries, and thus the images under the partial map of the graph neighbours of a given vertex $v$ are all contained in the 1-ball around the image of $v$. In addition, we maintain that the conditions from Lemma \ref{step} also hold for the limits, $x_\infty$ and $x'_\infty$, so that the partial map also induces a step isometry on the limit elements.

Precisely, we maintain the following inductive hypotheses throughout the construction
for a given $t\geq 1$.

\begin{enumerate}
\item $x^{(t)}\in V_{t}$ and $\xi^{(t)}\in W_{t}$. Also, if $t>0$ then
$V_{t-1}\subseteq V_t$ and $W_{t-1}\subseteq W_t$.

\item The map $\varphi _{t}$ is a graph isomorphism from the subgraph
$G[V_{t}]$ to $H[W_{t}]$.  Moreover, $\varphi_t$ extends $\varphi_{t-1}$.

\item The following holds for all points $x$ and $y$ in $V_t$:

\smallskip\noindent
For all $i\in \Nnn \cup \{ \infty \}$, we have that
\begin{enumerate}
\item $\lfloor x_{i} \rfloor = \lfloor x_{i}^{\prime }\rfloor $

\item $\fracpart{x_{i}}<\fracpart{y_{i}}$ if and only if
$\fracpart{x_{i}^{\prime }}<\fracpart{y_{i}^{\prime }}$.
\end{enumerate}

\end{enumerate}

If the induction hypothesis holds for all $t\in \Nnn$, then by (1) and (2),
the map $\varphi :V\rightarrow W$
defined as $\bigcup\limits_{t\in \mathbb{N}}\varphi _{t}$ is an isomorphism, and the
conclusion follows. Note
that, by Lemma \ref{step}, condition (3) implies that $\varphi_t $ is a
step-isometry from $V_t$ to $W_t$.

Set $V_0=W_0=\emptyset$.

For the induction step, let $x=x^{(t+1)}$. If $x\in V_{t}$, then there
is nothing to be done, and we can proceed with the `back'-step, which involves finding a pre-image for $\xi^{(t+1)}$. If not, then let  $V_{t+1}= V_{t}\cup \{x \}$.
Form the sequences $\ell=(\ell_1,\ell_2\dots )$ and $u=(u_1,u_2,\dots) $ as
follows. For all $i\in \mathbb{N}$,
\begin{eqnarray*}
u_{i} &=&\min \big(\{1\}\cup \{\fracpart{z'_{i}}:z\in V_{t}\text{ and }
\fracpart{z_i}>\fracpart{x_i}\}\big),%
\text{ and} \\
\ell _{i} &=&\max \big( \{0\}\cup\{\fracpart{z'_{i}}:z\in V_{t}\text{ and }
\fracpart{z_i}<\fracpart{x_i}\}\big).
\end{eqnarray*}
Also define
\begin{eqnarray*}
u_{\infty } &=&\min \big(\{1\}\cup
\{\fracpart{z'_{\infty }}:z\in V_{t}\text{ and }\fracpart{z_{\infty
}}>\fracpart{x_{\infty }}\}\big)\text{, and} \\
\ell _{\infty } &=&\max \big(\{0\}\cup\{\fracpart{z'_{\infty }}:z\in V_{t}\text{ and }%
\fracpart{z_{\infty }}<\fracpart{x_{\infty }}\}\big).
\end{eqnarray*}

That is, in each coordinate $i$, the remainder of $x_i$ lies between $\ell_i$ and $u_i$, and no other coordinates from sequences in $V_t$ have remainders (in the $i$-th coordinate) between $\ell_i$ and $u_i$. The elements $\ell_\infty$ and $u_\infty$ are defined so that the remainder of $x_\infty $ lies between $\ell_\infty$ and $u_\infty$. The notation $\ell_\infty$ and $u_\infty$ suggests that these values are the limits of the sequences $(\ell_i)$ and $(u_i)$. We now claim and prove that this is indeed the case.

\begin{claim*} The sequences $u$ and $\ell$ defined above are in
$c$. Moreover,  $\lim_{n\rightarrow \infty } u_{n} =u_{\infty }$ and
$\lim_{n\rightarrow \infty}\ell _{n} =\ell _{\infty }$.
\end{claim*}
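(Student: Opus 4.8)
The plan is to show first that the sequences $u=(u_i)$ and $\ell=(\ell_i)$ converge, with limits $u_\infty$ and $\ell_\infty$ respectively; since each $u_i,\ell_i\in[0,1]$, this immediately gives $u,\ell\in c$. I would focus on $u$; the argument for $\ell$ is symmetric (replacing minima by maxima, ``$>$'' by ``$<$'', and $\{1\}$ by $\{0\}$). The key structural fact to exploit is that $V_t$ is \emph{finite}, so the sets appearing in the definitions of $u_i$ and $u_\infty$ are built from the same finite index set $V_t$, and each $z\in V_t$ satisfies $z\in c$, hence $z_i\to z_\infty$, and also $z'=\varphi_t(z)\in W_t\subseteq c$, hence $z'_i\to z'_\infty$.

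The main step is a case analysis on the strict inequality $\fracpart{z_\infty}\gtrless\fracpart{x_\infty}$ for each of the finitely many $z\in V_t$. Fix $z\in V_t$. Since $V_{t}\cup\{x\}$ is \idf, we have $\fracpart{z_\infty}\ne\fracpart{x_\infty}$; suppose first $\fracpart{z_\infty}>\fracpart{x_\infty}$. Because $\fracpart{z_i}\to\fracpart{z_\infty}$ and $\fracpart{x_i}\to\fracpart{x_\infty}$ — here I use that $x,z\in c$ with non-integer limits, so for large $i$ the integer parts $\lfloor z_i\rfloor,\lfloor x_i\rfloor$ stabilize and the fractional parts converge (the observation stated just before Lemma~\ref{step}) — there is $M_z$ such that $\fracpart{z_i}>\fracpart{x_i}$ and $\fracpart{z'_i}>\fracpart{x'_i}$ for all $i\ge M_z$, and moreover $\fracpart{z'_i}\to\fracpart{z'_\infty}$. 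Symmetrically, if $\fracpart{z_\infty}<\fracpart{x_\infty}$ then $\fracpart{z_i}<\fracpart{x_i}$ for all large $i$, so $z$ eventually drops out of the set defining $u_i$. Taking $M=\max_{z\in V_t}M_z$ (a maximum over a finite set), for all $i\ge M$ the set $\{\fracpart{z'_i}:z\in V_t,\ \fracpart{z_i}>\fracpart{x_i}\}$ has \emph{the same index set} $S=\{z\in V_t:\fracpart{z_\infty}>\fracpart{x_\infty}\}$, and likewise $u_\infty=\min(\{1\}\cup\{\fracpart{z'_\infty}:z\in S\})$. Then
\[
u_i=\min\Big(\{1\}\cup\{\fracpart{z'_i}:z\in S\}\Big)\longrightarrow
\min\Big(\{1\}\cup\{\fracpart{z'_\infty}:z\in S\}\Big)=u_\infty,
\]
using continuity of the minimum of a fixed finite collection of convergent sequences. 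This proves $u_n\to u_\infty$, hence $u\in c$; the identical argument applied to $\ell$ finishes the claim.

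The main obstacle I anticipate is purely bookkeeping: making sure that the \idf\ hypothesis is invoked to guarantee each strict inequality $\fracpart{z_\infty}\ne\fracpart{x_\infty}$ (and hence the sign persists for large $i$), and that one correctly tracks the fact that $\varphi_t$ is only assumed to preserve the \emph{order} of fractional parts coordinatewise (inductive hypothesis (3)(b)), which is exactly what lets one conclude $\fracpart{z'_i}>\fracpart{x'_i}$ from $\fracpart{z_i}>\fracpart{x_i}$. There is a subtle point that one needs $\fracpart{z'_i}\to\fracpart{z'_\infty}$, which requires $x'_\infty,z'_\infty\notin\Z$; but $W$ is \idf\ as a subset of $c$ and $W_t\subseteq W$, so this holds. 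Apart from these checks, everything reduces to elementary limits of finitely many sequences.
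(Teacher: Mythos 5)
Your proof is correct and follows essentially the same route as the paper: both arguments use the \idf\ property to get strict separation of the limiting fractional parts, conclude that for large $i$ the set of $z\in V_t$ with $\fracpart{z_i}>\fracpart{x_i}$ (respectively $<$) stabilizes, and then pass to the limit in a minimum/maximum of a fixed finite family of convergent sequences $\fracpart{z'_i}\to\fracpart{z'_\infty}$. (The only blemish is your passing reference to $\fracpart{z'_i}>\fracpart{x'_i}$: the image $x'$ of $x=x^{(t+1)}$ is not yet defined at this stage, but that inequality is never actually used in your argument.)
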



\begin{proof}[Proof of Claim]
By the \idf\ property, for all distinct $w$ and $z$ in $V_t$, we have
$\fracpart{w_\infty} \ne \fracpart{z_\infty}$. Also $\fracpart{x_\infty}$
is distinct from $\fracpart{z_\infty}$ for each $z\in
V_t$ and $0<\fracpart{x_\infty}<1$. Hence, $F_\infty=
\{\fracpart{z_\infty}\colon z\in V_t\}\cup \{0,1, \fracpart{x_\infty}\}$
consists of $|V_t|+3$ elements.
Since $F_\infty$ contains 0 and 1,  $\fracpart{x_\infty}$ is neither the
largest nor the smallest element of the set. For all sufficiently large $i$,
the ordering of $F_i=\{\fracpart{z_i}\colon z\in V_t\}\cup
\{0,1,\fracpart{x_i}\}$ is identical to the ordering of $F_\infty$
(that is if $z_i$ is the $k$th largest element of $F_i$ for some
$z\in V_t$, then $z_\infty$ is the $k$th largest element of
$F_\infty$). Since $u_i$ is the next largest element of $F_i$ after $x_i$ and
$\ell_i$ is the next smallest element of $F_i$ before $x_i$, we deduce that
$u_i\to u_\infty$ and $\ell_i\to\ell_\infty$
as required.
\end{proof}

To continue the proof of the theorem, we will first form a \emph{target sequence}
$y \in c$. The
idea is to choose a sequence whose remainders are trapped above
$\ell$ and
below  $u $ coordinate-wise. If we would set $\varphi_{t+1}(x)=y$, then the extended function would satisfy condition (3) of the induction hypothesis, i.e.~the map would be a coordinate-wise step isometry. The target sequence may not be
in $W$, but we can use a density argument to find a sequence in $W$ sufficiently
close to our target sequence which satisfies condition (3) of the induction hypothesis, and also condition (2).
 
Let's define the target sequence. For all $i\in \mathbb{N}$, define
\begin{equation*}
y_{i}=\lfloor x_{i}\rfloor +\frac{\ell _{i}+u_{i}}{2}.
\end{equation*}
Then by properties of limits, $y=( y_1,y_2,\dots ) $ converges
to $y_{\infty }=\lfloor x_{\infty }\rfloor +
\frac{\ell _{\infty }+u_{\infty }}{2}$.
As stated above, if we could choose $\varphi_{t+1}(x)=y$, then condition (3)
of the induction hypothesis would be satisfied. However, we need to find an image
that also fulfills the other
conditions. Specifically, the image of $x$ must be in $W$, and it must
be correctly joined to the elements in $W_t$ to maintain the isomorphism condition.
Thus, we will define a ball around
$y$ with the property that each element in this ball will maintain condition (3).
We can then invoke density and the \gec~condition (which holds by
Theorem~\ref{random}) to find another point in this ball that
qualifies for the induction step.

Define $\alpha = \frac{u_{\infty }-\ell _{\infty }}{6}$.
Note that $\alpha <1$ by definition.
Now we have that $|u_{\infty }-\fracpart{y_{\infty }}|=3\alpha $ and
$|\ell _{\infty}-\fracpart{y_{\infty
}}|=3\alpha $. As $\ell$ and $ u $ are
convergent sequences, there is a positive integer $M$ such that for all
$i\geq M$, $|u_{i}-u_{\infty }|<\alpha$,
$|\ell _{i}-\ell _{\infty }|<\alpha$, and
$\lfloor y_i \rfloor =\lfloor x_{i}\rfloor = \lfloor x_{\infty }\rfloor$.
Thus, we also have that
$|\fracpart{y_{\infty }}- \fracpart{y_{i}}|=|y_{\infty }-y_{i}|<\alpha$.
Then for all $i\geq M$ we have by the triangle equality that

\begin{eqnarray*}
|\fracpart{y_i}-u_i| &\geq &|\fracpart{y_{\infty }}-u_{\infty }|-|u_i-u_{\infty
}|-|\fracpart{y_{\infty }}-\fracpart{y_{i}}| \\
&\geq &3\alpha -\alpha -\alpha  \\
&=&\alpha .
\end{eqnarray*}

By a symmetric argument, for all $i\geq M$ we have that
$|\ell_{i}-\fracpart{y_{i}}|\geq \alpha $.
Therefore, for all $i\geq M$ and $t\in \mathbb{R}$ such that $%
|t-y_{i}|<\alpha $, we have that $\ell _{i}<t<u_{i}$.

Next, let $\alpha'<\alpha$ be such that
$$
2\alpha ^{\prime }\leq \min \{u_{i}-\ell _{i}:1\leq i<M\}.
$$
It follows that any point $v\in B_{\alpha'}(y)$,
when taken as the image of $x$, would satisfy property
(3) of the induction hypothesis. 

We now invoke the \gec~condition
to find a sequence in $W$ that is
correctly joined. We do this in two steps. First we identify
the set $A$ of all vertices in
$W_t$ that the image of $x$ should be adjacent to. Then we find a point $v\in
B_{\alpha'}(y)\cap W$ so that $A$ is contained in a unit ball around $v$. Then we invoke the \gec~condition to find a sequence close to $v$ which is adjacent to
all vertices in $A$, and no other vertices in $W_t$.

Precisely, let  $A=\{ z'\in W_t : z\sim x\}$, the set of images of
neighbours of $x$ in $W_t$. Note that $A\subseteq B(y)$. Namely, since $G$
is a geometric 1-graph, each
neighbour $z$ of $x$ lies within $B(x)$. Our construction of $y$
guaranteed that the step-isometry condition is satisfied if $y$ is chosen as the
image of $x$, and thus, the
image $z'$ lies in $B(y)$. Let $\beta >0$ be so that $A\subseteq B_{1-\beta}(y)$,
and $\beta\leq \alpha'/2$. We use the density of $W$ to find a point
$v\in W\setminus W_t$ so
that $\|v-y\|<\beta$. By our choice of $\beta$, $A\subseteq B(v)$.

Next, we invoke the \gec~property to find  a point $w\in W \cap B_{\beta}(v)$
which is adjacent to all vertices in $A$, and to no other vertices in $W_t$.
In particular, for all
$z\in V_{t}$, $z$ is adjacent to $x$ if and only if $z'$ is
adjacent to $w$. Now let $W_{t+1}=W_t\cup \{w\}$, and set
$\varphi_{t+1}(x)=w$. Then condition (2)
holds for $\varphi_{t+1}$. Since $\|w-y\|\leq \|w-v\|+\|v-y\|<
2\beta\leq \alpha'$, condition (3) holds as well.

Similarly, we can find a pre-image for $\xi^{(t+1)}$, and adjust $V_{t+1}$, $W_{t+1}$
accordingly, to satisfy condition (1) and complete the reverse direction of the
back-and-forth argument.
\end{proof}

Using Theorem~\ref{main} and the results of Section~\ref{randomsec},
Theorem~\ref{mainone} follows immediately.

\section{Proof of Theorem~\ref{maintwo}: almost all sets in $c_0$ are Rado}\label{c0sec}

In this section, our goal is to study the Rado property for countable subsets of
$c_0$. It turns out that it is easier to first prove the Rado property for
countable subsets of $c_a$ for $a\in(0,1)$
(because $\lfloor x_i\rfloor$ is eventually 0 for all $x\in c_a$, while no
such statement holds for $c_0$). It is then straightforward to deduce the result for $c_0$.

We prove the following theorem.

\begin{theorem}\label{main2}
Let $a\in (0,1)$. Let $V$ and $W$ be dense, countable, \idf\ sets in $c_{a}$
satisfying the \iop\  If $G$ and $H$ are \gec\ geometric 1-graphs
with vertex sets $V$ and $W$ respectively, then $G\cong H$.
\end{theorem}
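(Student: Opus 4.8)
The plan is to mimic the back-and-forth construction used in the proof of Theorem~\ref{main}, but adapted to the space $c_a$ with $a \in (0,1)$. As in that proof, we build increasing finite subsets $V_t \subseteq V$ and $W_t \subseteq W$ and partial isomorphisms $\varphi_t \colon V_t \to W_t$, maintaining the inductive hypothesis that $\varphi_t$ is a graph isomorphism of induced subgraphs, that $\lfloor x_i \rfloor = \lfloor \varphi_t(x)_i \rfloor$ for all coordinates $i \in \N$, and that $\fracpart{x_i} < \fracpart{y_i}$ iff $\fracpart{\varphi_t(x)_i} < \fracpart{\varphi_t(y)_i}$. The key structural simplification over the $c$ case is that there is no ``infinite coordinate'' to track: for every $x \in c_a$ we have $x_i \to a \in (0,1)$, so $\lfloor x_i \rfloor = 0$ for all sufficiently large $i$, and $\fracpart{x_i} = x_i \to a$. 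A version of Lemma~\ref{step} adapted to $c_a$ (with the index set $\N$ rather than $\N \cup \{\infty\}$) shows that conditions (3a) and (3b) guarantee $\varphi_t$ is a step-isometry, hence by Corollary~\ref{useful} compatible with the graph structure of geometric $1$-graphs.

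The heart of the forth-step is, given a new vertex $x = x^{(t+1)} \in V \setminus V_t$, to produce an image in $W$ that respects conditions (3a), (3b) and is correctly joined to $\varphi_t(V_t \cap B(x))$. For each coordinate $i$, define $u_i$ and $\ell_i$ exactly as in the $c$ proof: the fractional parts of the images $z'_i$ over those $z \in V_t$ whose $i$th fractional part lies immediately above, resp.\ below, that of $x_i$ (capped by $1$, resp.\ $0$). The analogue of the Claim here is that $u = (u_i)$ and $\ell = (\ell_i)$ converge; since all points of $V_t$ have $i$th fractional part tending to their limit in $(0,1)$ and these limits are distinct and distinct from $a = x_\infty$ by the \idf\ property, for large $i$ the ordering of $\{\fracpart{z_i} : z \in V_t\} \cup \{0,1,a\}$ stabilizes, forcing $u_i \to u_\infty$ and $\ell_i \to \ell_\infty$ where $u_\infty, \ell_\infty$ are the neighbours of $a$ in that limiting order. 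One then sets the target sequence $y_i = 0 + \tfrac{\ell_i + u_i}{2}$ for all $i$ (note $\lfloor y_i \rfloor = 0$, matching $\lfloor x_i \rfloor = 0$ for $i$ large, and for the finitely many small $i$ one instead uses $\lfloor x_i \rfloor + \tfrac{\ell_i+u_i}{2}$ — but since \idf\ here only constrains $\N$-coordinates and $x \in c_a$, essentially all $\lfloor x_i\rfloor$ are $0$; handle the finite exceptional set directly). As in the $c$ proof, choose $\alpha = (u_\infty - \ell_\infty)/6$, pick $M$ so that beyond $M$ the quantities $u_i, \ell_i, y_i$ are all within $\alpha$ of their limits, shrink to $\alpha' < \alpha$ with $2\alpha' \le \min\{u_i - \ell_i : i < M\}$, so that every $v \in B_{\alpha'}(y)$ satisfies condition (3). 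Then, exactly as before, let $A = \{z' \in W_t : z \sim_G x\}$, observe $A \subseteq B(y)$ because $y$ is step-isometric, pick by density a point $v \in W \setminus W_t$ with $\|v - y\|$ small enough that $A \subseteq B(v)$, and invoke the \gec\ property of $H$ (available by Theorem~\ref{random}) to get $w \in W \cap B_\beta(v)$ correctly joined to $A$ and to no other vertex of $W_t$; set $\varphi_{t+1}(x) = w$, $W_{t+1} = W_t \cup \{w\}$.

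The one genuinely new ingredient, and the step I expect to be the main obstacle, is this: in the $c$ case, the target sequence $y$ was literally constructed to satisfy (3), and density alone sufficed to find a nearby point of $W$. In $c_a$, however, the \iop\ hypothesis must be what lets us actually find a point of $W$ in the required ball that \emph{also} satisfies the strict fractional-order conditions (3b) relative to all of $W_t$ — density alone guarantees a point near $y$, but not that its fractional parts in the finitely many coordinates $i < M$ sit in the correct open interval $(\ell_i, u_i)$, nor (more subtly) that the fractional-part orderings among $\{w\} \cup W_t$ agree coordinatewise with those among $\{x\} \cup V_t$. This is precisely the scenario \iop\ was designed for: given the finitely many points $x^{(i_1)}, \dots, x^{(i_k)}$ of $V_{t+1}$ and the finitely many required orderings (one per coordinate, but only finitely many distinct ones occur), \iop\ for $W$ supplies coordinates realizing exactly those orderings, and combined with density in the Banach space $c_a$ and the \gec\ property one can produce a $w$ meeting all constraints simultaneously. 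So the proof should first establish a lemma extracting from \iop\ the statement ``for any finite $W_0 \subseteq W$, any $x$-data, and any $\varepsilon > 0$, there is $w \in W$ within $\varepsilon$ of the target realizing the prescribed fractional orders,'' then feed this into the back-and-forth scheme above. The back-direction is symmetric, using \iop\ of $V$.

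\begin{proof}[Proof of Theorem~\ref{main2}]
Deferred; see the discussion above for the structure of the argument.
\end{proof}
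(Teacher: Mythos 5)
Your proposal has a genuine gap at its core: you attempt to port the proof of Theorem~\ref{main} to $c_a$ with the identity map on coordinates, but the analogue of the Claim fails there. In $c_a$ every point of $V_t\cup\{x\}$ has the \emph{same} limit $a$; the \idf\ property for $c_a$ deliberately omits the case $n=\infty$, so the ``distinct limits, distinct from $a$'' that you invoke do not exist. Consequently the ordering of $\{\fracpart{z_i}\colon z\in V_t\}\cup\{0,1,\fracpart{x_i}\}$ need not stabilize as $i\to\infty$ --- indeed the \iop\ guarantees that every ordering recurs infinitely often, the exact opposite of stabilization --- and the trapping quantities $u_i$ and $\ell_i$ both cluster at $a$, so that $u_\infty-\ell_\infty=0$, $\alpha=0$, and there is no ball $B_{\alpha'}(y)$ all of whose points satisfy condition (3). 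No density or \gec\ argument can rescue this, because the set of points satisfying the coordinatewise fractional-order constraints against $W_t$ at \emph{all} coordinates simultaneously has empty interior. You also misplace the role of \iop: you suggest it is needed to control the finitely many coordinates $i<M$, but in the $c$ argument plain density already handles those (the target $y_i$ sits at distance at least $\alpha'$ from both $\ell_i$ and $u_i$); the obstruction is the infinitely many large coordinates.

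The paper's actual proof abandons coordinatewise matching altogether. It builds \emph{two} bijections: $f$ on point indices and $g$ on coordinate positions, and only requires $\lfloor y^{(f(i))}_{g(j)}\rfloor=\lfloor x^{(i)}_j\rfloor$ and agreement of fractional-part orderings between coordinate $j$ on the $V$ side and coordinate $g(j)$ on the $W$ side. The point of working in $c_a$ with $a\in(0,1)$ is that each $x^{(n)}$ has $x^{(n)}_j\in(0,1)$ for all but finitely many $j$, so only finitely many coordinates ever carry constraints, and unconstrained coordinates contribute $0$ to $\lfloor|x_j-y_j|\rfloor$ and hence to $\lfloor\|x-y\|\rfloor$. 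The \iop\ is then used exactly where your argument breaks: to find \emph{fresh} coordinate positions $r_1,\ldots,r_L$ on the $W$ side realizing the finitely many prescribed fractional-part orderings of the already-matched points, after which the target region $U$ for the image of $x^{(n)}$ contains a genuine open set and the \gec\ property applies. To repair your proof you would need to introduce this coordinate-permutation device; the rest of your back-and-forth scaffolding is compatible with it.
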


From Theorem~\ref{main2}, the conclusion of Theorem~\ref{maintwo} follows as we now show.

\begin{proof}[Proof of Theorem~\ref{maintwo}]
Let $\theta(x)=x+(\frac 12,\frac 12,\ldots)$, so that $\theta$ is a
bijective isometry from $c_0$ to $c_{1/2}$. If $\mu$ is a non-aligned
measure fully supported on $c_0$, then
$\mu'(A)=\mu(\theta^{-1}(A))$ is a non-aligned measure fully supported on
$c_{1/2}$. By Lemmas \ref{lem:idf}, \ref{lem:iop} and \ref{lem:dense},
${\mu'}^\N$-a.e. sequence of points,
${x'}^{(1)},{x'}^{(2)},\ldots$ in $c_{1/2}$ is dense, \idf,\ and satisfies
the \iop, so that $W=\{{x'}^{(1)},{x'}^{(2)},\ldots\}$ is Rado by Theorems
\ref{random} and  \ref{main2}. Since the Rado
property is preserved by isometries, we deduce that $V=\theta^{-1}(W)$ is
Rado for ${\mu'}^\N$-almost every $({x'}^{(n)})$. Hence, for
$\mu^\N$-almost all $x^{(1)},x^{(2)},\ldots$ in ${c_0}^\N$,
$V=\{x^{(1)},x^{(2)},\ldots\}$ is Rado.
\end{proof}

Before proving Theorem \ref{main2}, we comment on the argument, and the differences
with the proof of Theorem \ref{main}. As before, the idea is to use
a back-and-forth argument. Similarly, we use induction hypotheses to guarantee that at
each step of the induction, we have a step isometry between the finite sets of vertices 
that are matched (otherwise Theorem \ref{mot} would show that we will not obtain a
graph isomorphism in the limit). Lemma \ref{step} still provides a sufficient condition
for a matching to be a step isometry, but unfortunately, in the case of $c_a$, it is essentially
impossible to satisfy the conditions of the lemma: if $x$ is a randomly chosen element
of $c_a$, write a sequence of $+$'s and $-$'s to denote if each coordinate is above or 
below $a$. One quickly sees that the probability that two random sets $V$ and $W$ 
contain elements with identical sign sequences is 0 (this is the union of countably many
events of probability 0). This prevents us from directly applying Lemma \ref{step}. 
To circumvent this difficulty, we inductively build not only the bijection between the
sets $V$ and $W$, but also a bijection, $g$ of the coordinates, so the partial bijection
from $V$ to $W$ sends a point $x$ with integer part $k\ne 0$ in position $i$ to a point $y$ with
integer part $k$ in position $g(i)$, and if $\fracpart{x^{(i)}}<\fracpart{{x'}^{(i)}}$, then 
$\fracpart{y^{(g(i))}}<\fracpart{{y'}^{(g(i))}}$. This is sufficient to maintain the 
step isometry property that we require.

\begin{proof}[Proof of Theorem~\ref{main2}]
We show, using a back-and-forth type argument, that $G$ and $H$
are almost surely isomorphic. Enumerate $V$ and $W$
as $V=\{ x^{(1)},x^{(2)},\dots\}$ and
$W=\{ y^{(1)},y^{(2)},\dots\}$ respectively. By assumption, these sets are
\idf\ and have the \iop

We build bijections $f\colon\N\to\N$ and $g\colon\N\to\N$ so that:
\begin{enumerate}
\item[($i$)]
$\lfloor y^{(f(i))}_{g(j)}\rfloor=\lfloor x^{(i)}_j\rfloor$ for each $i$ and $j$;
\item[($ii$)]
$\fracpart{y^{(f(i))}_{g(j)}} < \fracpart{y^{(f(i'))}_{g(j)}}$ if and only if
$\fracpart{x^{(i)}_j} < \fracpart{x^{(i')}_j}$ for each $i$, $i'$ and $j$;
\item[($iii$)] $x^{(i)}$ is adjacent to $x^{(i')}$ if and only
if $y^{(f(i))}$ is adjacent to $y^{(f(i'))}$.
\end{enumerate}

The function $f$ can be transformed into an isomorphism from
$G$ to $H$ by defining $\phi:V\rightarrow W$ by
\begin{equation*}
\phi(x^{(i)})=y^{(f(i))},\text{ for all }i\in \N.
\end{equation*}
Property ($iii$) guarantees that $f$ is indeed an isomorphism; properties ($i$) and ($ii$) guarantee that $f$ is a coordinate-wise step isometry under the map $j\rightarrow g(j)$.

We construct the bijections $f$ and $g$ by induction. That is,
we inductively construct sets $S_n$ and $T_n$, for $n\in \N$,
and partially define $f$ and $g$ on these sets.

The sets $S_n$ will represent the (super-)indices of the points
$(x^{(i)})$ that have so far been matched. The point
$x^{(i)}$ is matched to the point $y^{(f(i))}$. The sets $T_n$
will represent the sets of coordinate positions
(in the $(x^{(i)})$ sequences) that have been matched with
coordinate positions in the
$(y^{(f(i))})$ sequences: the $j$th coordinate of all sequences
$x\in V$ that have been matched are matched with the $g(j)$-th
coordinate of the image $y\in W$ of the sequence.

The induction hypotheses that we maintain are:
\begin{enumerate}
\item $S_{n}\supseteq \{1,\ldots,n\}$ and $T_{n}\supseteq
\{1,\ldots,n\}$; \label{it:prop1}
\item $f(S_{n})\supseteq\{1,\ldots,n\}$ and $g(T_{n})
\supseteq\{1,\ldots,n\}$; \label{it:prop2}
\item $T_{n}\supseteq\{j\colon \mbox{there exists } i\in
S_{n}\colon x^{(i)}_j\not\in(0,1)\}$;
\label{it:largexjs}
\item $g(T_{n})\supseteq\{j\colon \mbox{there exists } i\in
f(S_{n})\colon y^{(i)}_j \not\in(0,1)\}$; \label{it:largeyjs}
\item $\fracpart{x^{(i)}_k}<\fracpart{x^{(j)}_k}$ if and only if
$\fracpart{y^{(f(i))}_{g(k)}}<\fracpart{y^{(f(j))}_{g(k)}}$ for
$i,j\in S_{n}$ and $k\in T_{n}$. \label{it:prop3}
\item
$\lfloor y^{(f(i))}_{g(k)}\rfloor=\lfloor x^{(i)}_k\rfloor$
for each $i\in S_n$ and $k\in \N$;\label{it:prop4}
\item $x^{(i)}\sim x^{(j)}$ if and only if $y^{(f(i))}\sim y^{(f(j))}$
for $i,j\in S_{n}$. \label{it:goodconn}
\end{enumerate}
Conditions (\ref{it:prop1}) and (\ref{it:prop2}) ensure that
$\bigcup_{n\in N} S_n=\N$ and $\bigcup_{n\in N} T_n=\N$, and thus,
that $f$ and $g$ are bijections of $\N$, as required. Conditions
(\ref{it:prop3}) and (\ref{it:prop4}) ensure that $f$ and $g$ satisfy
Properties ($i$) and ($ii$) above, and Condition (\ref{it:goodconn})
ensures that $f$ and $g$ satisfy Property ($iii$).
Conditions (\ref{it:largexjs}) and (\ref{it:largeyjs}) are
necessary to ensure that the induction can be propagated.

We now proceed with the induction. Let $S_0=T_0=\emptyset$.
Suppose that for a given $n\ge 1$, we have constructed $S_{n-1}$ and $T_{n-1}$,
and $f$ and $g$ are defined on those sets, and that the inductive properties hold.
We now show how to extend the sets $S_{n-1}$ and $T_{n-1}$ in such a way
that these properties are preserved.  We focus on making an extension
satisfying properties \eqref{it:prop1},
\eqref{it:largexjs}, \eqref{it:prop3}, \eqref{it:prop4}
and \eqref{it:goodconn} (that is, we go forth).
A symmetric (and so omitted) ``going back'' argument applies for the extension
properties \eqref{it:prop2},
\eqref{it:largeyjs}, \eqref{it:prop3}, \eqref{it:prop4} and \eqref{it:goodconn}.

Here we match $x^{(n)}$ (if it is not already matched) with a
sequence $y^{(i)}$, and we match some
coordinate positions (including the $n$-th if it is not already matched,
to ensure that the
final map $g$ is a bijection) with some
coordinate positions in the $y$'s.

First, set $S_n=S_{n-1}\cup\{n\}$ and
$T_{n}=T_{n-1}\cup\{n\}\cup\{j\colon x^{(n)}_j\not\in(0,1)\}$;
this satisfies property (\ref{it:largexjs}). Let $T_{n}\setminus T_{n-1}=
\{j_1,\ldots,j_L\}$. Notice that $T_{n}$ is a finite
set since $x^{(n)}\in c_{a}$ and $0<a<1$. For each $k$, $1\leq k\leq L$,
define an order $\prec_k$ on $S_{n-1}$ by $i\prec_k i'$ if
$\fracpart{x^{(i)}_{j_k}}<\fracpart{x^{(i')}_{j_k}}$. Let
$r_1,\ldots,r_L$ be coordinate positions not in $g(T_{n-1})$ such that,
for all $i,i'\in S_{n-1}$ and each $1\le k\le L$, $\fracpart{y^{(f(i))}_{r_k}}<
\fracpart{y^{(f(i'))}_{r_k}}$ if and only if $i\prec_k i'$. Such coordinate
positions exist by the \iop\

Now define $g(j_k)=r_k$ for
$k=1,\ldots,L$.
Notice that condition \eqref{it:prop4} is automatically
satisfied for $i\in S_{n-1}$
and $k\in T_n\setminus T_{n-1}$ by conditions
\eqref{it:largexjs} and \eqref{it:largeyjs}
of the induction hypothesis
since one has $\lfloor x^{(i)}_k\rfloor = 0
=\lfloor y^{(f(i))}_{g(k)}\rfloor$.
This has extended the collection of dimensions.
Notice that property \eqref{it:prop3} is automatically
satisfied (for $j\in T_{n}$) by the choice of $r_1,\ldots,r_L$
provided that $i$ and $i'$ belong to $S_{n-1}$.

To finish the argument, we need to choose $f(n)$ (assuming that
$n\not\in S_{n-1}$). For $j\in T_n$, let $k=g(j)$, let
$a_k=\lfloor x^{(n)}_j\rfloor$, let
$b_k=\max\big(\{0\}\cup \{ \fracpart{y^{(f(i))}_k}\colon i\in S_{n-1},
\fracpart{x^{(i)}_j}<\fracpart{x^{(n)}_j}\}\big)$ and
$c_k=\min\big(\{1\}\cup \{\fracpart{y^{(f(i))}_k}\colon i\in
S_{n-1},\fracpart{x^{(i)}_j}> \fracpart{x^{(n)}_j}\}\big)$.
Let $J_k$ be the open interval $(a_k+b_k,a_k+c_k)$. For $k\not\in g(T_n)$,
let $J_k=(0,1)$ and let $U=c_a\cap \prod_{k\in\N}J_k$. Notice that $U$ contains an
open set of radius $\frac 12\min_{k\in g(T_n)}(c_k-b_k)$.

\begin{claim*}
If $z\in U$, then $\lfloor\|z-y^{(f(i))}\|\rfloor=\lfloor\|x^{(n)}-x^{(i)}\|\rfloor$
for each $i\in S_{n-1}$.
\end{claim*}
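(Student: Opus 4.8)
The plan is to show that for any $z\in U$ and any $i\in S_{n-1}$, the step-distance $\lfloor\|z-y^{(f(i))}\|\rfloor$ in $c_a$ equals $\lfloor\|x^{(n)}-x^{(i)}\|\rfloor$, by comparing the two norms coordinate by coordinate and invoking the \idf\ property together with the observation (used in Lemma~\ref{step}) that $\lfloor\|w\|\rfloor = \max_m \lfloor|w_m|\rfloor$ when all coordinate differences have non-integer fractional parts. Concretely, I would first fix $i\in S_{n-1}$ and set $y = y^{(f(i))}$. The key is to prove that for every coordinate $j\in\N$,
\begin{equation*}
\lfloor |x^{(n)}_j - x^{(i)}_j|\rfloor = \lfloor |z_{g(j)} - y_{g(j)}|\rfloor,
\end{equation*}
where on the right we only need to consider positions of the form $k=g(j)$ for $j\in T_n$, because for $k\notin g(T_n)$ both $z_k$ and $y_k$ lie in $(0,1)$ (for $z$ since $J_k=(0,1)$ there, and for $y=y^{(f(i))}$ since condition~\eqref{it:largeyjs} of the induction hypothesis together with $k\notin g(T_n)$ forces $y_k\in(0,1)$), so $\lfloor|z_k-y_k|\rfloor = 0$, matching the contribution of any coordinate $j\notin T_n$ on the left (there $x^{(n)}_j\in(0,1)$ by construction of $T_n$, and $x^{(i)}_j\in(0,1)$ by condition~\eqref{it:largexjs}, so $\lfloor|x^{(n)}_j-x^{(i)}_j|\rfloor=0$ as well).

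For the coordinates $j\in T_n$ with $k=g(j)$, the argument splits exactly as in Lemma~\ref{step}: by condition~\eqref{it:prop4} of the induction hypothesis we have $\lfloor y_k\rfloor = \lfloor x^{(i)}_j\rfloor$, and by definition $a_k = \lfloor x^{(n)}_j\rfloor$ while $z_k\in J_k = (a_k+b_k, a_k+c_k)$ gives $\lfloor z_k\rfloor = a_k = \lfloor x^{(n)}_j\rfloor$ (using $0\le b_k$ and $c_k\le 1$). So the integer parts already match on both sides. It remains to compare the ordering of fractional parts: I claim $\fracpart{z_k} < \fracpart{y_k}$ if and only if $\fracpart{x^{(n)}_j} < \fracpart{x^{(i)}_j}$ (and likewise with $>$; equality cannot occur on either side by the \idf\ property). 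If $\fracpart{x^{(i)}_j} > \fracpart{x^{(n)}_j}$, then by definition $\fracpart{y_k} = \fracpart{y^{(f(i))}_k} \ge c_k$ (it is one of the terms in the min defining $c_k$), while $\fracpart{z_k} < c_k$; hence $\fracpart{z_k} < \fracpart{y_k}$. Symmetrically, if $\fracpart{x^{(i)}_j} < \fracpart{x^{(n)}_j}$, then $\fracpart{y_k}\le b_k < \fracpart{z_k}$. This establishes the coordinate-wise equality of step-distances, and then taking the max over all coordinates $m$ of $\lfloor|z_m - y_m|\rfloor$ on one side and over $j$ of $\lfloor|x^{(n)}_j - x^{(i)}_j|\rfloor$ on the other (valid because, by the \idf\ property, all the relevant differences have non-integer fractional part, so the $\ell_\infty$-step-distance is the max of the coordinate step-distances) yields $\lfloor\|z - y^{(f(i))}\|\rfloor = \lfloor\|x^{(n)} - x^{(i)}\|\rfloor$.

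The main obstacle I anticipate is bookkeeping rather than a conceptual difficulty: one must be careful that the index set over which the maximum is taken genuinely captures the norm, i.e.\ that the supremum $\sup_m |z_m - y_m|$ is attained and has the correct integer part. Since $z, y^{(f(i))}\in c_a$ and $0<a<1$, all but finitely many coordinates of $z - y^{(f(i))}$ lie in $(-1,1)$ (they tend to $0$), so the $\lfloor\cdot\rfloor$ of the norm is determined by finitely many coordinates and equals the maximum of the finitely many nonzero $\lfloor|z_m-y_m|\rfloor$ (or is $0$ if there are none); the same holds for $x^{(n)} - x^{(i)}$. One also needs to double-check the edge case where $S_{n-1}$ contributes no constraint at a given coordinate (so $b_k = 0$ or $c_k = 1$), but the inequalities above go through verbatim since $\fracpart{z_k}\in(0,1)$ always. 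With these points addressed, the claim follows.
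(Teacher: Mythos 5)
Your proposal is correct and follows essentially the same route as the paper's proof: reduce to coordinates, dispose of positions outside $T_n$ and $g(T_n)$ via conditions \eqref{it:largexjs} and \eqref{it:largeyjs}, match integer parts via condition \eqref{it:prop4} and the definition of $J_k$, match the ordering of fractional parts via the definitions of $b_k$ and $c_k$, and conclude by the case analysis of Lemma~\ref{step}. You in fact spell out two points the paper leaves implicit (the $b_k$/$c_k$ inequalities and the attainment of the max of the coordinate floors), which is a welcome addition but not a different argument.
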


\begin{proof}
Let $z\in U$.
Since all points belong to $c_a$, we have $\|z-y^{(f(i))}\|=
\max_k |z_k-y^{(f(i))}_k|$
and $\|x^{(n)}-x^{(i)}\|=\max_j |x_j^{(n)}-x^{(i)}_j|$.
In particular, it suffices to show that
$\max_k\lfloor |z_k-y^{(f(i))}_k|\rfloor=\max_j\lfloor|x^{(n)}_j-x^{(i)}_j|\rfloor$
for each $i\in S_{n-1}$.

Notice that if $k\not\in g(T_n)$, then $z_k,y^{(f(i))}_k\in (0,1)$ for each
$i\in S_{n-1}$ by Condition
\eqref{it:largeyjs}, so that $\lfloor |z_k-y^{(f(i))}_k|\rfloor=0$. Similarly if
$j\not\in T_n$, then $\lfloor |x^{(n)}_j-x^{(i)}_j|\rfloor=0$ for each $i\in S_{n-1}$
by Condition \eqref{it:largexjs}.

Hence, it suffices to show that
$\lfloor |z_{g(j)}-y^{(f(i))}_{g(j)}|\rfloor=\lfloor |x^{(n)}_j-x^{(i)}_j|\rfloor$
for each $i\in S_{n-1}$ and each $j\in T_n$. First notice that $\lfloor
z_{g(j)}\rfloor =\lfloor x^{(n)}_j\rfloor$ for each $z\in U$ and
$\lfloor y^{(f(i))}_{g(j)}\rfloor = \lfloor x^{(i)}_j\rfloor$ for each
$i\in S_{n-1}$ and $j\in T_n$ by Conditions \eqref{it:largexjs},
\eqref{it:largeyjs} and \eqref{it:prop4}. Also
$\fracpart{z_{g(j)}}<\fracpart{y^{(f(i))}_{g(j)}}$ if and only if
$\fracpart{x^{(n)}_j}<\fracpart{x^{(i)}_j}$ by the choice of
$(b_k)_{k\in g(T_n)}$ and
$(c_k)_{k\in g(T_n)}$. An argument exactly analogous to the proof of
Lemma \ref{step} establishes that
$\lfloor |z_{g(j)}-y^{(f(i))}_{g(j)}|\rfloor=\lfloor |x^{(n)}_j-x^{(i)}_j|\rfloor$ for each
$i\in S_{n-1}$ and $j\in T_n$, completing the proof.
\end{proof}

Now let $A=\{i\in S_{n-1}\colon \|x^{(n)}-x^{(i)}\|<1\}$. By the above claim,
if $i\in A$, then $\|z-y^{(f(i))}\|<1$ for each $z\in U$.
Since $H$ satisfies the \gec\ property and $U$ contains an open set, there exists
$y^{(t)}\in U\cap H$ with $t\not\in f(S_{n-1})$ such that
$y^{(t)}\sim_H y^{(f(i))}$ if and only if $x^{(n)}\sim_G x^{(i)}$.
Defining $f(n)=t$ ensures that the induction hypotheses are
satisfied at the $n$th stage as required.
\end{proof}

\section{non-isomorphism of Rado graphs}
\label{sec:noniso}

Let $GR(c)$ and $GR(c_0)$ be the unique isomorphism type of the geometric Rado graphs provided by Theorems \ref{main} and \ref{main2}, respectively. Similarly, let $GR(\ell^d_\infty)$ be the unique
isomorphism type of the geometric Rado graph on $\ell^d_\infty$ provided by \cite{bona}. We show in this section that these graphs are mutually non-isomorphic. In fact, we show more generally that a
geometric graph on a countable dense subset of a Banach space determines the space up to isometric isomorphism.

To show this result, we need to make use of a concept introduced in the 1940's by Hyers and Ulam \cite{HyersUlam} and developed over the next five decades by a number of authors. An
$\epsilon$\emph{-isometry} from a Banach space $X$ to a Banach space $Y$ is a map $T\colon X\to Y$ such that $\Big|\|T(x)-T(x')\|-\|x-x'\|\Big|\le \epsilon$ for all $x,x'\in X$. A map $T$ from a
Banach space $X$ to a Banach space $Y$ is $\delta$\emph{-surjective} if for all $y\in Y$, there exists $x\in X$ with $\|T(x)-y\|<\delta$.

The following theorem states that $\delta$-surjective $\epsilon$-isometries of
Banach spaces are uniformly approximated by genuine isometries.

\begin{theorem}[Dilworth \cite{dilworth}, Proposition 2]\label{dil}
Let $X$ and $Y$ be real Banach spaces and let $T$ be a $\delta$-surjective
$\epsilon$-isometry from $X$ to $Y$. Then there
exists a $K<\infty$ and a surjective linear isometry $U\colon X\to Y$ such that
$\|T(x)-U(x)\|\le K$ for all $x\in X$.
\end{theorem}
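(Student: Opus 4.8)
The plan is to reduce the statement to the already-known \emph{surjective} case. First I would normalize: we may assume $T(0)=0$, since applying the $T(0)=0$ case to the map $x\mapsto T(x)-T(0)$ and then enlarging $K$ by $\|T(0)\|$ yields the general statement. Putting $x'=0$ in the definition of an $\epsilon$-isometry already records $\big|\,\|T(x)\|-\|x\|\,\big|\le\epsilon$ for all $x$. Next, using the axiom of choice, I would fix a map $S\colon Y\to X$ with $S(0)=0$ and $\|T(S(y))-y\|\le\delta$ for every $y$, and check (triangle inequality only) that $S$ is an $(\epsilon+2\delta)$-isometry with $S(0)=0$, that $\|S(T(x))-x\|\le\epsilon+\delta$ for all $x$, and that $T(x)=T(x')$ forces $\|x-x'\|\le\epsilon$; in other words $T$ is approximately bijective, with $\delta$-dense image and fibres of diameter at most $\epsilon$.

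The input I would invoke is the surjective case: a \emph{surjective} $\eta$-isometry $R\colon X\to Y$ with $R(0)=0$ lies within $2\eta$ of a surjective linear isometry $U\colon X\to Y$ (Omladi\v{c} and \v{S}emrl, building on work going back to Hyers and Ulam). To apply it I first need $T$ to be genuinely onto, so I would repair it: since $T$ has $\delta$-dense image, select for each $y\in Y$ a point $x_y$ with $\|T(x_y)-y\|\le\delta$, arranged so that $y\mapsto x_y$ is injective, and set $\tilde T(x_y)=y$ while $\tilde T(x)=T(x)$ otherwise. Then $\tilde T$ is a surjective $(\epsilon+2\delta)$-isometry with $\tilde T(0)=0$ and $\|\tilde T(x)-T(x)\|\le\delta$ for all $x$, all by direct triangle-inequality checks. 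The surjective case applied to $\tilde T$ produces a surjective linear isometry $U$ with $\|\tilde T(x)-U(x)\|\le 2(\epsilon+2\delta)$, and hence $\|T(x)-U(x)\|\le 2\epsilon+5\delta=:K<\infty$ for all $x$.

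The step I expect to be the main obstacle is the selection of the points $x_y$: making $y\mapsto x_y$ injective is a system-of-distinct-representatives problem for the fibres $T^{-1}(\overline{B_\delta(y)})$, and to carry it out one has to exploit the approximate injectivity of $T$ together with a careful transfinite choice argument. There is an alternative route, closer to Dilworth's own, that sidesteps this entirely: re-run the Gevirtz/Omladi\v{c}--\v{S}emrl linearization directly on $T$, replacing each appeal to surjectivity (pulling a point of $Y$ back along $T$) with an appeal to $\delta$-surjectivity (pulling it back to within $\delta$). Every such substitution feeds an extra error of order $\delta$ into the estimates, and the whole point of the argument is that these errors remain uniformly bounded through the construction --- which is exactly why the conclusion guarantees only \emph{some} finite $K$ rather than an explicit multiple of $\epsilon$. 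By either route one obtains a surjective linear isometry $U$ with $\sup_{x\in X}\|T(x)-U(x)\|<\infty$, as required.
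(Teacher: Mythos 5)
This statement is not proved in the paper at all: it is quoted verbatim as Proposition~2 of Dilworth \cite{dilworth} and used as a black box, so there is no in-paper argument to compare yours against. What follows is therefore an assessment of your sketch on its own terms.

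Your overall strategy --- normalize $T(0)=0$, surjectivize $T$ by a bounded modification, then invoke the Omladi\v{c}--\v{S}emrl theorem for genuinely surjective $\epsilon$-isometries --- is reasonable, and the triangle-inequality bookkeeping you record (that $\tilde T$ would be a surjective $(\epsilon+2\delta)$-isometry within $\delta$ of $T$, and the resulting value of $K$) is correct. The genuine gap is exactly the step you flag and then defer: the existence of an injective selection $y\mapsto x_y$ with $\|T(x_y)-y\|\le\delta$. As written this can fail, for two reasons. First, any injection of $Y$ into $X$ requires $|X|\ge|Y|$; this does follow from $\delta$-surjectivity (a $\delta$-net $N=T(X)$ gives the dense set $\{z/n\colon z\in N,\ n\in\N\}$, so the density character of $Y$ is at most $|X|$, whence $|Y|\le|X|$ by standard cardinality estimates for Banach spaces), but it must be proved and you never address it. Second, with the exact tolerance $\delta$ the fibres $T^{-1}\bigl(\overline{B_\delta(y)}\bigr)$ are only guaranteed to be nonempty --- an $\epsilon$-isometry need not be continuous, so a fibre can a priori be very small --- and a system of distinct representatives for an uncountable family of possibly small sets need not exist. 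The repair is to enlarge the tolerance: if $\|T(x_0)-y\|<\delta$, then $B_r(x_0)\subseteq T^{-1}\bigl(\overline{B_{\delta+\epsilon+r}(y)}\bigr)$, so each enlarged fibre has cardinality $|X|\ge|Y|$, and a transfinite recursion along a well-ordering of $Y$ of order type $|Y|$ then produces the injection, since at each stage fewer than $|Y|$ points have been used. With that modification your reduction goes through, at the harmless cost of replacing $\delta$ by $\delta+\epsilon+r$ in the constants. Your ``alternative route'' of re-running the Gevirtz/Omladi\v{c}--\v{S}emrl construction with $\delta$-errors is plausible but is an assertion rather than an argument; as submitted, the proof is incomplete at the surjectivization step.
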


\begin{theorem}
Let $X$ and $Y$ be real Banach spaces and suppose $V$ and $W$ are countable dense subsets of $X$ and $Y$ respectively. Further, let $G$ and $H$ be geometric 1-graphs with vertex sets $V$ and $W$. If
$G$ and $H$ are isomorphic, then there is an surjective isometry between $X$ and $Y$.
\end{theorem}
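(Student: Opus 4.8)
The plan is to use the isomorphism $\phi\colon G\to H$ to build an $\epsilon$-isometry between $X$ and $Y$, and then apply Theorem~\ref{dil} to upgrade it to a genuine surjective isometry. By Corollary~\ref{useful}, since $\overline V = X$ and $\overline W = Y$ are convex (being whole Banach spaces), the isomorphism $\phi$ is a step-isometry from $V$ to $W$; that is, $\lfloor \|u-v\|\rfloor = \lfloor \|\phi(u)-\phi(v)\|\rfloor$ for all $u,v\in V$. In particular $\big|\|\phi(u)-\phi(v)\| - \|u-v\|\big| < 1$ for all $u,v\in V$. This already shows $\phi$ behaves like a $1$-isometry on the dense set $V$.

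First I would extend $\phi$ to a map $T\colon X\to Y$ defined on all of $X$. Given $x\in X$, pick any $v\in V$ with $\|x-v\|<\varepsilon$ and set $T(x)=\phi(v)$; more cleanly, fix once and for all a function $\rho\colon X\to V$ with $\|x-\rho(x)\|<\varepsilon$ for all $x$ (possible by density of $V$, with $\varepsilon$ as small as we like), and put $T=\phi\circ\rho$, agreeing that $\rho$ is the identity on $V$ so that $T$ extends $\phi$. Then for $x,x'\in X$, writing $v=\rho(x)$, $v'=\rho(x')$, the triangle inequality gives $\big|\|x-x'\| - \|v-v'\|\big| < 2\varepsilon$, and combining with the step-isometry bound $\big|\|\phi(v)-\phi(v')\| - \|v-v'\|\big| < 1$ yields $\big|\|T(x)-T(x')\| - \|x-x'\|\big| < 1 + 2\varepsilon$. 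So $T$ is an $\epsilon_0$-isometry for $\epsilon_0 = 1+2\varepsilon < 2$ (the precise constant is immaterial for Theorem~\ref{dil}). Next I would check $\delta$-surjectivity: given $y\in Y$, use density of $W$ to find $w\in W$ with $\|w-y\|<\varepsilon$; since $\phi$ is onto $W$, $w=\phi(v)=T(v)$ for some $v\in V$, so $\|T(v)-y\|<\varepsilon$, and $T$ is $\varepsilon$-surjective, hence $\delta$-surjective for any $\delta>\varepsilon$.

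With these two facts in hand, Theorem~\ref{dil} applies directly: $T$ is a $\delta$-surjective $\epsilon_0$-isometry from the real Banach space $X$ to the real Banach space $Y$, so there is a surjective linear isometry $U\colon X\to Y$ with $\|T(x)-U(x)\|\le K$ for all $x\in X$, and in particular $X$ and $Y$ are isometrically isomorphic. This $U$ is the desired surjective isometry.

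I do not anticipate a serious obstacle here; the argument is essentially bookkeeping once one recognizes that Corollary~\ref{useful} converts the graph isomorphism into an approximate isometry on a dense set, and that Dilworth's theorem is exactly the tool designed to promote such approximate isometries to exact ones. The one point requiring a little care is making sure the extension $T$ is defined so that it genuinely extends $\phi$ (so that surjectivity of $\phi$ onto $W$ feeds into $\delta$-surjectivity of $T$) and that the $\epsilon$-isometry constant stays finite and uniform over all of $X$ — both handled by fixing the single choice function $\rho$ above rather than making ad hoc choices. A minor subtlety worth flagging: Theorem~\ref{dil} as quoted is for real Banach spaces, which is our setting throughout, so no complexification issue arises.
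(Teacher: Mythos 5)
Your proposal is correct and follows essentially the same route as the paper: extend the graph isomorphism to an approximate isometry on all of $X$ via density, verify the $\epsilon$-isometry and $\delta$-surjectivity bounds, and invoke Dilworth's theorem. The only differences are cosmetic (you get constants $1+2\varepsilon$ and $\varepsilon$ via Corollary~\ref{useful} and an $\varepsilon$-close choice function, where the paper gets $4$ and $3$ via Theorem~\ref{mot} and a radius-$1$ nearest-point rule), and these do not affect the applicability of Dilworth's result.
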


\begin{proof}
Let $\theta$ be the isomorphism from $G$ to $H$. Let $V$ be enumerated as
$\{x^{(1)},x^{(2)},\ldots\}$ and let $y^{(n)}=\theta(x^{(n)})$. By Theorem~\ref{mot},
we have that for $i\ne j$,
$d_G(x^{(i)},x^{(j)})<\|x^{(i)}-x^{(j)}\|+2$, so that $\|y^{(i)}-y^{(j)}\|
<d_H(y^{(i)},y^{(j)})=d_G(x^{(i)},x^{(j)})< \|x^{(i)}-x^{(j)}\|+2$; and similarly
$\|x^{(i)}-x^{(j)}\|<\|y^{(i)}-y^{(j)}\|+2$, so that
$\Big|\|\theta(x^{(i)})-\theta(x^{(j)})\|_Y-\|x^{(j)}-x^{(i)}\|_X\Big|<2$.
We show how to extend $\theta$ to a 3-surjective 4-isometry from $X$ to
$Y$. Namely, we define $n\colon X\to\mathbb N$ by $n(x)=\min\{i\colon \|x^{(i)}-x\|<1\}$.
We now define $T(x)=y^{(n(x))}$. We claim that $T$ is a 3-surjective 4-isometry.

To see this, first let $x,x'\in X$. Let $i=n(x)$ and $i'=n(x')$. We then have that
$\|T(x)-T(x')\|=\|\theta(x^{(i)})-\theta(x^{(i')})\|$, so that
$$
\Big|\|T(x)-T(x')\|-\|x^{(i)}-x^{(i')}\|\Big|<2.
$$
We also have that
$$
\Big|\|x^{(i)}-x^{(i')}\|-\|x-x'\|\Big|<2.
$$
We deduce that
$$
\Big|\|T(x)-T(x')\|-\|x-x'\|\Big|<4,
$$
so that $T$ is a 4-isometry.

Next, let $y\in Y$. Then there exists a $j$ such that $\|y-y^{(j)}\|<1$. We consider $T(x^{(j)})$: let $i=n(x^{(j)})$. Notice that $\|x^{(i)}-x^{(j)}\|<1$ by definition of $n(\cdot)$. By Theorem
\ref{mot}, we have that $\|y^{(i)}-y^{(j)}\|<2$. Hence, $T(x^{(j)})=y^{(i)}$ and $\|y^{(i)}-y\|<3$, so that $T$ is 3-surjective as required. Theorem~\ref{dil} then implies that $X$ and $Y$
are linearly isometric as Banach spaces.
\end{proof}

The spaces $\ell_\infty^d$ for $d\in\N$ (that is, $\R^d$ equipped with the $\ell_\infty$ norm) are evidently not isometrically equivalent to each other or to $c_0$ or $c$ since surjective isometries
preserve dimension. Further, $c$ and $c_0$ are not isometrically equivalent. One way to see this is as follows; see \cite{bryant} for additional background. The closed unit ball of $c$ has
\emph{extreme points} (that is, points that are not a convex combination of other points in the set; for example, $(1,1,\ldots)$). There are no extreme points of the closed unit ball of $c_0$: for
any $x$ in the closed unit ball of $c_0$, there is an $n$ such that $|x_n|<\frac 12$. Then $x$ is equal to $\frac 12(y+z)$, where $y$ and $z$ agree with $x$ in every position except the $n$th;
$y_n=x_n+\frac 12$ and $z_n=x_n-\frac 12$. However, it is evident that isometric equivalences preserve unit balls and extreme points.

\begin{corollary}\label{cornon}
The graphs $GR(\ell_\infty^d)$ for $d\in\N$, $GR(c)$ and $GR(c_0)$ are mutually non-isomorphic.
\end{corollary}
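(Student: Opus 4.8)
The plan is to deduce Corollary~\ref{cornon} directly from the theorem proved immediately above, which asserts that an isomorphism of geometric $1$-graphs on countable dense subsets of Banach spaces forces a surjective isometry between the ambient spaces, together with the elementary fact that the Banach spaces $\ell_\infty^d$ ($d\in\N$), $c$ and $c_0$ are pairwise non-isometric. First I would recall that $GR(\ell_\infty^d)$, $GR(c)$ and $GR(c_0)$ are, by Theorems~\ref{main} and \ref{main2} (and the corresponding finite-dimensional result from \cite{bona}), realized as geometric $1$-graphs on some countable dense subset of $\ell_\infty^d$, $c$ and $c_0$ respectively. If two of these graphs were isomorphic, the preceding theorem would yield a surjective linear isometry between the two corresponding Banach spaces, so it suffices to show no two of $\{\ell_\infty^d:d\in\N\}\cup\{c,c_0\}$ are isometrically isomorphic.

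The non-isometry claims are then handled case by case. For $\ell_\infty^d$ versus $\ell_\infty^{d'}$ with $d\ne d'$, and for $\ell_\infty^d$ versus either $c$ or $c_0$, it is enough to observe that a surjective linear isometry is in particular a linear homeomorphism, hence preserves (finite or infinite) dimension; since $\dim\ell_\infty^d=d<\infty$ while $c$ and $c_0$ are infinite-dimensional, these cases are immediate. For $c$ versus $c_0$, I would use the extreme-point argument sketched in the paragraph preceding the corollary: the closed unit ball of $c$ has extreme points (e.g.\ the constant sequence $(1,1,\ldots)$), whereas the closed unit ball of $c_0$ has none, because given $x$ in the closed unit ball of $c_0$ one can pick $n$ with $|x_n|<\tfrac12$ and write $x=\tfrac12(y+z)$ with $y,z$ equal to $x$ off the $n$th coordinate and $y_n=x_n+\tfrac12$, $z_n=x_n-\tfrac12$, both lying in the ball. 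Since a surjective linear isometry maps closed unit ball to closed unit ball and preserves convex combinations, it preserves extreme points, so $c$ and $c_0$ cannot be isometric.

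Combining these observations, no surjective isometry exists between any two distinct spaces in the list, so by the contrapositive of the preceding theorem the graphs $GR(\ell_\infty^d)$ ($d\in\N$), $GR(c)$, $GR(c_0)$ are mutually non-isomorphic. I do not anticipate a genuine obstacle here; the only point requiring a little care is making sure the geometric $1$-graphs underlying $GR(c)$ and $GR(c_0)$ really are built on \emph{dense} subsets (so that the hypotheses of the preceding theorem apply), which is guaranteed by Lemma~\ref{lem:dense} and the construction of the measures $\mu$ and $\mu_0$ in Section~\ref{randomsec}. The extreme-point distinction between $c$ and $c_0$ is the most substantive of the otherwise routine verifications.
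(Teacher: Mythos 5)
Your proposal is correct and follows exactly the paper's route: the corollary is deduced from the preceding theorem (isomorphic geometric $1$-graphs force a surjective isometry of the ambient Banach spaces) together with the dimension argument separating the $\ell_\infty^d$ from each other and from $c$, $c_0$, and the extreme-point argument separating $c$ from $c_0$, all of which the paper states in the paragraph immediately before the corollary.
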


We finish with some open problems. We would like to consider other infinite dimensional separable spaces, such as the space of continuous functions $C([0,1])$ on $[0,1]$. Are almost all countable
sets in $C([0,1])$ Rado with respect to some natural measure? The abundance (or non-abundance) of Rado sets for other normed spaces such as $\ell_p$ and $L^p$ (where $1<p<\infty$) remains open.

\section{Appendix}

In this appendix, we show that dense countable \idf\ sets are abundant, in the sense
that we give a probabilistic construction that almost surely yields a dense countable
\idf\ subset of $c$. We also probabilistically construct dense countable \idf\ subsets
of $c_a$ with the \iop\ 
We remark that we make extensive use of the Borel-Cantelli lemmas. We recall that the
first Borel-Cantelli lemma states that if $(A_n)$ is a countable collection of events
in a probability space satisfying $\sum_n\PP(A_n)<\infty$, then almost surely the 
number of those events that occur is finite. The second Borel-Cantelli lemma is a 
partial converse, stating that if $(A_n)$ is a countable collection of independent
events such that $\sum_n\PP(A_n)=\infty$, then almost surely the number of events
that occur is infinite. 

An \emph{affine coordinate hyperplane} is a
subset of $c$ (or
$c_a$) of the form $\{x\in c\colon x_j=\alpha\}$ for some $j\in\N$ and some
$\alpha\in\R$. A \emph{limit coordinate hyperplane} is a subset of $c$ of the form
$\{x\in c\colon \lim_{j\to\infty}
x_j=\alpha\}$ for some $\alpha\in\R$.

A measure $\mu$ on $c$ is said to be \emph{non-aligned} if $\mu(H)=0$ for any affine
coordinate hyperplane or limit coordinate hyperplane. A measure $\mu$ on $c_a$ is
said to be \emph{non-aligned} if
$\mu(H)=0$ for any affine coordinate hyperplane.
A measure $\mu$ on the space $X$ ($c$ or $c_a$) has
the \emph{integer-distance free} (or \idf) property if, for all $n\in
\N$, when $X=c_a$, or for all $n\in\N\cup\{\infty\}$, when $X=c$, it
holds that $\mu\{ x \in X\colon x_n\in \Z\}=0$ and
$\mu\times\mu\{(x,y)\in X\times X\colon x_n-y_n\in \Z\}=0$.
Intuitively, a
countable subset of $X$ chosen according to an \idf\ measure $\mu$
should give $\mu$-almost surely an i.d.f.\ set. We will show below
that this is indeed the case in Lemma~\ref{lemm}.

Given a measure $\mu$ on $X$ (one of $c$ and $c_a$), we can build a measure
$\mu^\N$, so that sampling a sequence from this measure $(Z_1,Z_2,\ldots)$
one obtains a sequence of elements of $X$, where each coordinate is 
independently chosen with distribution $\mu$.

A measure $\mu$ on a topological space $X$ is said to be \emph{fully supported}
if $\mu(U)>0$ for each non-empty open set $U$ (or equivalently if $\mu(B)>0$ 
for each non-empty open ball).

\begin{lemma} \label{lemm} If $\mu$ is a measure on the space $X$ (one of $c$ or $c_a$)
with the \idf\ property,
then $\mu^\N$-a.e.\ element of
$X^\N$ is a set with the \idf\ property.
\end{lemma}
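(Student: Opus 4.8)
The plan is to show that the bad event — that a randomly chosen sequence of points fails the \idf\ condition — is a countable union of events, each of which has $\mu^\N$-measure zero by the hypotheses on $\mu$. Recall that an element of $X^\N$ is a sequence $(x^{(1)}, x^{(2)}, \ldots)$ of points of $X$, and that this sequence is an \idf\ set precisely when two families of conditions hold: first, for every index $k$ and every coordinate $n$ (with $n$ ranging over $\N$, or over $\N\cup\{\infty\}$ when $X=c$), we need $x^{(k)}_n\notin\Z$; second, for every pair of distinct indices $k\neq\ell$ and every such coordinate $n$, we need $x^{(k)}_n - x^{(\ell)}_n\notin\Z$.

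First I would fix $k$ (and $n$) and consider the ``diagonal-type'' bad event $D_{k,n} = \{(x^{(i)})_i : x^{(k)}_n\in\Z\}$. Since under $\mu^\N$ the coordinate $x^{(k)}$ is distributed according to $\mu$, the measure $\mu^\N(D_{k,n})$ equals $\mu\{x\in X : x_n\in\Z\}$, which is $0$ by the \idf\ property of $\mu$. Next I would fix distinct $k,\ell$ (and $n$) and consider $E_{k,\ell,n} = \{(x^{(i)})_i : x^{(k)}_n - x^{(\ell)}_n\in\Z\}$. Under $\mu^\N$ the pair $(x^{(k)}, x^{(\ell)})$ is distributed according to $\mu\times\mu$ (here independence of the coordinates of $\mu^\N$ is what is used), so $\mu^\N(E_{k,\ell,n})$ equals $\mu\times\mu\{(x,y) : x_n - y_n\in\Z\}$, which is again $0$ by the \idf\ property. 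Then I would note that the set of sequences that are \emph{not} \idf\ sets is exactly
\[
\bigcup_{k\in\N}\ \bigcup_{n}\ D_{k,n}\ \ \cup\ \ \bigcup_{k\neq\ell}\ \bigcup_{n} E_{k,\ell,n},
\]
a countable union (the index set for $n$ is $\N$ or $\N\cup\{\infty\}$, both countable, and the index sets for $k$ and for pairs $(k,\ell)$ are countable) of $\mu^\N$-null sets, hence $\mu^\N$-null. Therefore $\mu^\N$-a.e.\ element of $X^\N$ is an \idf\ set.

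There is essentially no obstacle here: the only points requiring a word of care are that the coordinate projections of $\mu^\N$ have the claimed laws ($\mu$ for a single coordinate, $\mu\times\mu$ for a pair of distinct coordinates), which is immediate from the product construction of $\mu^\N$ described before the lemma, and that the exceptional index $n=\infty$ is included in the $X=c$ case — but this is exactly why the \idf\ property for measures on $c$ was stated to range over $\N\cup\{\infty\}$, so the same argument applies verbatim to the limit coordinate. The countability of the union is the whole content, and it is transparent.
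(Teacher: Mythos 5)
Your proposal is correct and follows exactly the same route as the paper's (much terser) proof: the set of non-\idf\ sequences is a countable union of null events, with single-coordinate projections of $\mu^\N$ having law $\mu$ and pairs having law $\mu\times\mu$. Your version merely spells out the decomposition that the paper leaves implicit.
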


\begin{proof}
The set of elements $(x^{(1)},x^{(2)},\ldots)$ of $\Omega^\N$ where there is some pair $x^{(i)}$ and $x^{(j)}$ such that, for some $n\in \N\cup \{ \infty \} $, (or $n\in \N$ in case where
$X=c_a$), satisfying $x^{(i)}_n\in\Z$ or $x^{(i)}_n-x^{(j)}_n\in\Z$, is the countable union of sets of measure 0.
\end{proof}

In turn, we argue that it is sufficient to have a non-aligned measure
in each of $c$ and $c_0$.

\begin{lemma}\label{lem:idf}
If $\mu$ is a non-aligned measure on $c$ or $c_a$, then $\mu$ has the \idf\ property.
\end{lemma}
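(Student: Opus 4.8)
The plan is to show that if $\mu$ is non-aligned, then each of the ``bad'' sets appearing in the definition of the \idf\ property is contained in a countable union of coordinate hyperplanes (affine, and limit in the case $X=c$), hence has $\mu$-measure (or $\mu\times\mu$-measure) zero. There are two families of conditions to check: the ``single-point'' conditions $\mu\{x\colon x_n\in\Z\}=0$, and the ``difference'' conditions $\mu\times\mu\{(x,y)\colon x_n-y_n\in\Z\}=0$, in each case for $n\in\N$ and, when $X=c$, also for $n=\infty$.

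For the single-point conditions, first observe that for fixed $n\in\N$,
\[
\{x\in X\colon x_n\in\Z\}=\bigcup_{k\in\Z}\{x\in X\colon x_n=k\},
\]
a countable union of affine coordinate hyperplanes, each of $\mu$-measure $0$ by non-alignedness; so the whole set has measure $0$. When $X=c$ and $n=\infty$, the same argument works using limit coordinate hyperplanes: $\{x\colon x_\infty\in\Z\}=\bigcup_{k\in\Z}\{x\colon \lim_j x_j=k\}$, again a countable union of $\mu$-null sets.

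For the difference conditions, I would use Fubini's theorem. Fix $n\in\N$ and $k\in\Z$; then by Fubini,
\[
(\mu\times\mu)\{(x,y)\colon x_n-y_n=k\}=\int_X \mu\{x\colon x_n=y_n+k\}\,d\mu(y),
\]
and for each fixed $y$ the inner set $\{x\colon x_n=y_n+k\}$ is an affine coordinate hyperplane, hence $\mu$-null; so the integral is $0$. Summing over $k\in\Z$ gives $(\mu\times\mu)\{(x,y)\colon x_n-y_n\in\Z\}=0$. The case $X=c$, $n=\infty$ is identical with $\{x\colon x_\infty=y_\infty+k\}$ a limit coordinate hyperplane. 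Finally, I would remark that for $X=c_a$ the limit coordinates are constant ($x_\infty=a$ for all $x$), so no $n=\infty$ condition is needed there, consistent with the definitions; the relevant bad sets only involve finite coordinates and the affine-hyperplane argument suffices.

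The only mild subtlety — and the one place I would be careful — is the measurability needed to apply Fubini's theorem to the diagonal-type sets $\{(x,y)\colon x_n-y_n=k\}$; this is routine since the maps $x\mapsto x_n$ (evaluation at a fixed coordinate) and $x\mapsto x_\infty$ (a pointwise limit of evaluations) are Borel-measurable on $c$ with the sup-norm topology, so the sets in question are Borel in $X\times X$. Beyond that, the proof is just a bookkeeping exercise in taking countable unions of null sets.
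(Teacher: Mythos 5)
Your proof is correct and follows essentially the same route as the paper's: decompose each bad set into a countable union over the integers of (affine or limit) coordinate hyperplane events, and apply Fubini's theorem to reduce the two-variable difference conditions to the non-alignedness of a single hyperplane for each fixed $y$. If anything you are slightly more explicit than the paper, whose displayed union omits the set $\{x\in c\colon x_\infty\in\Z\}$ (it is covered by the same argument), and your remark on the Borel measurability of $x\mapsto x_n$ and $x\mapsto x_\infty$ is a reasonable extra precaution for the Fubini step.
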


\begin{proof}
We deal with the two spaces in turn, although the proofs are nearly identical.

First if $X=c$, we observe that $\mu$ has the \idf\ property if
$\mu\times\mu(L)=0$, $\mu(B_j)=0$ for each $j$
and $\mu\times\mu(C_j)=0$ for each $j$, 
where
$L=\{(x,y)\in c\times c\colon\lim_j (x_j-y_j)\in\Z\}$,
$B_j= \{x\in c\colon x_j\in\Z\}$, and $C_j=
\{(x,y)\in c\times c\colon x_j-y_j\in\Z\}$.

\begin{align*}
\mu\times\mu(L)&=\sum_{n\in\Z}\int\mathbf 1_{\{\lim_{j}(x_j-y_j)=n\}}
\,d(\mu\times\mu)
(x,y)\\
&=\sum_{n\in\Z}\int\left(\int\mathbf 1_{\{\lim_{j}(x_j-y_j)=n\}}
\,d\mu(x)\right)\,d\mu(y)\\
&=\sum_{n\in\Z}\int\left(\int\mathbf 1_{\{\lim_{j}x_j=\lim_j y_j+n\}}
\,d\mu(x)\right)\,d\mu(y),
\end{align*}
where we used Fubini's theorem. By the non-aligned property, the inner
integral is 0 for
each value of $\lim_j y_j+n$, and so $\mu\times\mu(L)=0$. An exactly
similar argument shows that
$\mu(B_J)=0$ and $\mu\times\mu(C_j)=0$ for each $j$.

In the case $X=c_a$, there is no $L$ to consider. Define
$C_j$ as above, but applied to elements of $c_a$.
Redefine $B_j= \{x\in c_a\colon x_j\in\Z\}$.
Now  $\mu$ has the \idf\ property if $\bigcup_j B_j\cup\bigcup_j C_j$
has measure zero.
The countable union of the $C_j$'s and $B_j$'s  has measure 0 by a similar argument.
\end{proof}

For $c_0$, our vertex sets will need to satisfy the \iop\ This can be achieved by imposing an additional condition on the measure, which we define here. A measure $\mu$  on subsets of $\R^N$ is of
{\em product type} if it is of the form $\mu=\prod_{n=1}^\infty \nu_n$ where $(\nu_n)_{n\in\N}$ is a collection of measures on $\R$ (that is, the law of $\mu$ is a sequence of independent random
variables, where the $j$th coordinate has law $\nu_j$).

A measure $\nu$ on $\R$ is said to be \emph{non-atomic} if $\nu(\{a\})=0$ for each $a\in \R$. Note that a measure of product type is non-aligned if and only if each $\nu_n$ is non-atomic.

\begin{lemma}\label{lem:iop}
 If $\mu $ is a non-aligned measure of product type, then $\mu^\N$-almost every sequence of points in $\R^\N$ has the \iop
\end{lemma}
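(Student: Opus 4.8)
The plan is to express the \iop\ as a countable intersection of probability-one events and to establish each one using the second Borel--Cantelli lemma. Write the random sequence as $(x^{(1)},x^{(2)},\ldots)$, so that the family $\{x^{(i)}_j : i,j\in\N\}$ is mutually independent and $x^{(i)}_j$ has law $\nu_j$ for every $i$. For a finite set $S=\{i_1<\cdots<i_k\}\subseteq\N$ and a total order $\prec$ of $\{1,\ldots,k\}$, let $A_j^{S,\prec}$ be the event that, for all $m,n$, one has $m\prec n$ if and only if $\fracpart{x^{(i_m)}_j}<\fracpart{x^{(i_n)}_j}$. The lemma will follow once I show that, for each such pair $(S,\prec)$, almost surely $A_j^{S,\prec}$ holds for infinitely many $j$. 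Indeed, on the full-measure event obtained by intersecting these over the countably many pairs $(S,\prec)$, a prescribed finite list of orders $\prec_1,\ldots,\prec_N$ on $\{1,\ldots,k\}$ can be matched with distinct positions $j_1,\ldots,j_N$ by a greedy choice: having used $j_1,\ldots,j_{\ell-1}$, pick $j_\ell$ among the infinitely many positions realizing $\prec_\ell$, avoiding the finitely many already chosen.

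Next I would analyze a single coordinate. For fixed $j$, the variables $x^{(i_1)}_j,\ldots,x^{(i_k)}_j$ are i.i.d.\ with the law $\nu_j$, which is non-atomic since $\mu$ is a non-aligned measure of product type. The pushforward of $\nu_j$ under $t\mapsto\fracpart{t}$ is again non-atomic, because the preimage of any point is a coset of $\Z$, hence countable, hence $\nu_j$-null. Consequently the fractional parts $\fracpart{x^{(i_1)}_j},\ldots,\fracpart{x^{(i_k)}_j}$ are almost surely pairwise distinct and so determine a strict total order; and since they are i.i.d., exchangeability makes this order uniformly distributed over the $k!$ possibilities. Therefore $\mathbb{P}(A_j^{S,\prec})=1/k!$, a positive constant independent of $j$.

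Finally, since $A_j^{S,\prec}$ is measurable with respect to the block $(x^{(i)}_j)_{i\in S}$, and blocks for distinct $j$ involve disjoint members of the independent family, the events $(A_j^{S,\prec})_{j\in\N}$ are mutually independent. As $\sum_j\mathbb{P}(A_j^{S,\prec})=\sum_j 1/k!=\infty$, the second Borel--Cantelli lemma gives that almost surely $A_j^{S,\prec}$ occurs for infinitely many $j$, which is what was needed. One also notes that, $\mu^\N$-almost surely, the sequence satisfies the \idf\ condition (the one-dimensional marginals $\nu_n$ being non-atomic, argue as in Lemmas~\ref{lemm} and~\ref{lem:idf}), so that the \iop\ is meaningful for almost every sequence. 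I do not anticipate a real obstacle here: the only points needing care are the non-atomicity of the fractional-part pushforward and the bookkeeping that upgrades ``infinitely many good positions for each single order'' to ``distinct good positions for a prescribed list of orders.''
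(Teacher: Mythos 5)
Your proposal is correct and follows essentially the same route as the paper: the same events (the paper's $E_j$ are your $A_j^{S,\prec}$), the same computation $\mathbb{P}=1/k!$ from independence and non-atomicity, and the same application of the second Borel--Cantelli lemma coordinate by coordinate. Your write-up is in fact slightly more careful than the paper's on two points the paper leaves implicit --- the non-atomicity of the fractional-part pushforward and the greedy selection of distinct positions for a prescribed list of orders --- but these are refinements of detail, not a different argument.
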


\begin{proof}
Fix positive integers $k$ and $i_1<i_2<\ldots<i_k$, and let
$\prec$ be an ordering on $\{1,\ldots,k\}$. Let $m_1,\ldots,m_k$
be the enumeration of $\{1,\ldots,k\}$ in increasing $\prec$ order. We
consider realizations of $\mu^\N$, which we denote as
$(x^{(i)})_{i\in\N}$, where each $x^{(i)}$ is an element of $\R^\N$.

Let $E_j=\{(x^{(i)})\in(\R^\N)^\N\colon \fracpart{x^{(i_{m_1})}_j} <
\ldots < \fracpart{x^{(i_{m_k})}_j}\}$, that is, the collection of those
realizations such that in the $j$th coordinate, the order among the fractional
parts of $x^{(i_1)},\ldots,x^{(i_k)}$ is $\prec$.
Since $x^{(i_1)}_j,\ldots,x^{(i_k)}_j$ are chosen independently,
we see that $\mu^\N(E_j)=1/k!$. Further, since $\mu$ is a product
measure, we see that the events $(E_j)_{j=1}^\infty$ are mutually independent.

By the second Borel-Cantelli lemma, $\mu^\N$-almost every element of $(\R^\N)^\N$
belongs to infinitely many $E_j$'s. Since this holds for each of the $n!$ choices of
$\prec$, it follows that for
$\mu^\N$-a.e.\ choice of sequence $x^{(1)},x^{(2)},\ldots$ of points in $\R^\N$, one
can find a finite sequence of coordinates on which the ordering of the fractional
parts matches any chosen finite
sequence of orderings. Hence, $\mu^\N$-almost every element of $(\R^\N)^\N$
satisfies the \iop, as required.
\end{proof}

\begin{lemma}\label{lem:dense}
Let $X$ be a separable Banach space and let $\mu$ be a fully supported measure on
$X$. Then for $\mu^\N$-a.e. sequence $(x^{(i)})_{i\in\N}$ of points in $X$, the
set $V=\{x^{(i)}\colon i\in\N\}$ is a
dense subset of $X$.
\end{lemma}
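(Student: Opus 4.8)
The plan is to use separability to reduce density to a countable family of events, each of which holds $\mu^\N$-almost surely by independence together with full support. First, since $X$ is separable, fix a countable base $\{B_k\colon k\in\N\}$ for the topology of $X$ --- for instance, the collection of open balls with rational radius centred at the points of a fixed countable dense subset of $X$. A subset $V\subseteq X$ is dense in $X$ if and only if $V\cap B_k\neq\emptyset$ for every $k\in\N$, so it suffices to show that this holds simultaneously for all $k$ on a set of full $\mu^\N$-measure.

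Next, fix $k$ and consider the ``bad'' event
\[
N_k=\{(x^{(i)})_{i\in\N}\in X^\N\colon x^{(i)}\notin B_k\text{ for all }i\in\N\}.
\]
Under $\mu^\N$ the coordinates $x^{(1)},x^{(2)},\ldots$ are independent with common law $\mu$, so $\mu^\N(N_k)=\prod_{i=1}^\infty(1-\mu(B_k))$. Since $\mu$ is fully supported and $B_k$ is a non-empty open set, $\mu(B_k)>0$, and hence this infinite product equals $0$; that is, $\mu^\N(N_k)=0$. (Equivalently, the events $\{x^{(i)}\in B_k\}_{i\in\N}$ are independent with constant positive probability, so the second Borel--Cantelli lemma gives that $\mu^\N$-a.e.\ sequence has $x^{(i)}\in B_k$ for infinitely many $i$.)

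Finally, set $N=\bigcup_{k\in\N}N_k$, a countable union of $\mu^\N$-null sets, so $\mu^\N(N)=0$. For every sequence $(x^{(i)})_{i\in\N}\notin N$ and every $k\in\N$ there is some $i$ with $x^{(i)}\in B_k$, so the set $V=\{x^{(i)}\colon i\in\N\}$ meets every basic open set and is therefore dense in $X$. This establishes the lemma. The only step that needs any thought is the reduction to a countable family of events, which is precisely what separability supplies; beyond that the argument is routine, so I do not expect a substantive obstacle.
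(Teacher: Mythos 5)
Your proof is correct and takes essentially the same approach as the paper's: both reduce density to meeting each member of a countable neighbourhood basis obtained from separability, and then use independence together with full support (the paper via the second Borel--Cantelli lemma, you via the vanishing infinite product, which you note is equivalent), finishing by intersecting countably many full-measure events.
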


\begin{proof}
Let $(z_n)$ be a fixed countable dense subset of $X$, so that $\{B_{1/m}(z_n)\colon
m,n\in\N\}$ is a countable neighbourhood basis of $X$. For fixed $m$ and $n$,
by the second Borel-Cantelli
lemma, with probability 1, the random set, $V$, intersects $B_{1/m}(z_n)$
infinitely often since $\mu(B_{1/m}(z_n))>0$. Let $E_{m,n}$ be the event that 
$V$ intersects $B_{1/m}(z_n)$. Intersecting this countable
collection of events of measure 1 over all choices for $n$, one deduces that with probability 1, the
random set, $V$ intersects every $B_{1/m}(z_n)$. That is, $V$ is a dense subset of
$X$.
\end{proof}

We now construct some non-aligned fully supported measures on $c$ and $c_a$.
We base the construction on normal random variables, but we point out that
there is nothing delicate
about the construction: all that is required is that the distributions of all each coordinate
is non-atomic and fully supported on $\R$, 
but that the distributions of the successive coordinates become more
and more concentrated near $a$ to ensure that with probability 1, the sequence
of coordinates approaches $a$.

First, we construct a measure on $c_a$. Let $(Y_n)_{n\in\N}$ be a family of independent normal random variables with mean $a$
and variance $(\log n)^{-2}$ and let $\mu_a$ be the law of $(Y_1,Y_2,\ldots)$. (That is, $\mu_a(A)= \mathbb P\Big((Y_1,Y_2,\ldots)\in A\Big)$, for each Borel subset, $A$, of $X$). Note that $\mu_a$
is of product type. Since the random variables $(Y_j)_{j\in\N}$ are independent, $\mu_a$ is
exactly as required by the hypothesis of Lemma \ref{lem:iop}, so that $\mu_a^\N$-almost
every sequence in $(c_a)^\N$ has the \iop

\begin{lemma}
The measure $\mu_a$ as constructed above is non-aligned and
fully supported on $c_a$.
\end{lemma}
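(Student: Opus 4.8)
The plan is to verify the two defining conditions separately: (1) that $\mu_a$ is a well-defined Borel measure supported on $c_a$ (i.e.\ $\mu_a(c_a)=1$), (2) that it is non-aligned, and (3) that it is fully supported. For (1), I would show that if $Y_n\sim N(a,(\log n)^{-2})$ are independent, then $(Y_n)_{n\in\N}$ converges to $a$ almost surely. Writing $Y_n=a+Z_n$ with $Z_n\sim N(0,(\log n)^{-2})$, it suffices to show $Z_n\to 0$ a.s. Since $\sum_{n\ge 2}\mathbb P(|Z_n|>\varepsilon)\le \sum_{n\ge 2}2\exp(-\varepsilon^2(\log n)^2/2)$, and $(\log n)^2$ grows faster than $c\log n$ for any $c$, this sum converges; the first Borel--Cantelli lemma then gives $|Z_n|>\varepsilon$ only finitely often, a.s., for each $\varepsilon>0$, hence $Z_n\to 0$ a.s. Therefore $\mu_a((c_a))=1$, and $\mu_a$ is a Borel probability measure on $c_a$ (with the sup-norm Borel structure, which agrees with the product Borel structure on this separable space).

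For (2), non-aligned on $c_a$ means $\mu_a(H)=0$ for every affine coordinate hyperplane $H=\{x\colon x_j=\alpha\}$. But $\mu_a(H)=\mathbb P(Y_j=\alpha)=0$ since $Y_j$ has an absolutely continuous (normal) law; equivalently, each $\nu_n=N(a,(\log n)^{-2})$ is non-atomic, so by the remark preceding Lemma~\ref{lem:iop} the product measure is non-aligned. (There are no limit coordinate hyperplanes to check, since we are working in $c_a$, not $c$.)

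For (3), I need $\mu_a(U)>0$ for every nonempty open $U\subseteq c_a$; it suffices to treat basic open balls $B_\delta(x)$ with $x\in c_a$. Here is where the main work lies. Fix $x\in c_a$ and $\delta>0$. Choose $N$ large enough that $|x_n-a|<\delta/2$ for all $n>N$. I will bound
\[
\mu_a(B_\delta(x))\ge \mathbb P\Big(\sup_{n\le N}|Y_n-x_n|<\delta\ \text{ and }\ \sup_{n>N}|Y_n-x_n|<\delta\Big).
\]
By independence this factors; the first factor is $\prod_{n\le N}\mathbb P(|Y_n-x_n|<\delta)$, a finite product of strictly positive numbers. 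For the tail, since $|x_n-a|<\delta/2$, the event $|Y_n-a|<\delta/2$ forces $|Y_n-x_n|<\delta$, so it suffices to show $\prod_{n>N}\mathbb P(|Y_n-a|<\delta/2)>0$, i.e.\ $\sum_{n>N}\mathbb P(|Z_n|\ge\delta/2)<\infty$, which is exactly the Borel--Cantelli estimate from step (1). Hence $\mu_a(B_\delta(x))>0$, and $\mu_a$ is fully supported.

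The main obstacle is step (3): one must control the infinite tail product $\prod_{n>N}\mathbb P(|Y_n-a|<\delta/2)$, and the whole construction is engineered precisely so that the variances $(\log n)^{-2}$ shrink fast enough for $\sum_n \mathbb P(|Z_n|\ge\varepsilon)<\infty$ (giving both a.s.\ convergence to $a$ and positivity of the tail product) yet the measure remains genuinely infinite-dimensional. A minor point to get right is that the sup-norm open balls in $c_a$ are measurable and that the cylinder-type event above indeed lies inside $B_\delta(x)$; both follow since the sup over $n\le N$ of $|Y_n-x_n|$ and the sup over $n>N$ together dominate $\|Y-x\|_\infty$ only in the sense that if both are $<\delta$ then $\|Y-x\|_\infty<\delta$, which is all that is needed for the lower bound.
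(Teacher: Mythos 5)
Your proposal is correct and follows essentially the same route as the paper: a Gaussian tail bound plus the first Borel--Cantelli lemma to get $\mu_a(c_a)=1$, a factorization of an open ball into a finite head (a finite product of positive probabilities) and an infinite tail controlled by the same summability estimate to get full support, and non-atomicity of the normal law for non-alignment. The only cosmetic difference is that the paper first reduces to $a=0$ by translation, whereas you carry the general $a$ throughout.
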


\begin{proof}
To prove that $\mu_a$ is fully supported on $c_a$, we must prove two things:
firstly, that $\mu_a(c_a)=1$, and secondly, that $\mu_a(U)>0$ for any non-empty
open ball in $c_a$. We work in the case
$a=0$. For any $a\ne 0$, the law of $(Y_1-a,Y_2-a,\ldots)$ is just $\mu_0$,
so that if $\mu_0$ is fully supported on $c_0$, then $\mu_a$ is fully supported
on $c_0+(a,a,\ldots)=c_a$.

To prove that $\mu_0(c_0)=1$, we show for $\mu_0$-almost every sequence
$(x_1,x_2,\ldots)$ that $x_n\to 0$. To see this, write $N_j=(\log j)Y_j$.
Having done this, the random variables $(N_j)$ are
independent standard normal random variables.

Fix $\epsilon>0$. Then
$\mu_0(\{x\colon\limsup_j|x_j|>\epsilon\})\le \PP(\{|N_j|>\epsilon \log j
\text{ infinitely often}\})$.

We have by a standard estimate on the
tail of the normal distribution that
$\PP(N_j>\epsilon\log j)<(2\pi)^{-1/2}\exp(- \epsilon^2(\log j)^2/2)/(\epsilon\log j)$ (see \cite{durrett}, Theorem 1.2.3).
Thus this probability goes to zero as $j$ increases, which tells us that
$\mu_0(\{x\colon\limsup_j|x_j|>\epsilon\})=0$
for each $\epsilon>0$. Now taking a countable sequence of $\epsilon$'s converging to 0
and taking the union, we deduce that
$\mu_0(\{x\colon\limsup_j|x_j|=0\})=1$, so that $\mu_0(c_0)=1$.

Now let $U=B(x,r)$ be an open ball in $c_0$. Since $x\in c_0$, we have
$|x_j|<\frac r2$ for all $j$ greater than some $N$. Now
$U\supseteq \Big(\prod_{j=1}^N(x_j-r,x_j+r)\times \prod_{j>N}(-\frac
r2,\frac r2)\Big)\cap c_0$. The above estimate on the tail of a normal distribution shows
that $\prod_{j>N}\PP(|Y_j|<\frac r2)>0$. Since normal random variables are fully
supported on $\R$, we also have $\prod_{j\le N}
\PP(Y_j\in (x_j-r,x_j+r))$ is the product of a finite number of positive reals,
and hence is positive. Hence, we deduce $\mu_0(U)>0$ as required.

Finally, if $H$ is a coordinate hyperplane, $\{x\colon x_j=\alpha\}$, then
$\mu_0(H)=\PP(Y_j=\alpha)=0$, so that $\mu$ is non-aligned.
\end{proof}

To build a fully supported measure on $c$, we start from the previous construction. 
Let $(Y_n)$ be as above and let $Z$ be a standard normal random variable (or any 
fully supported non-atomic random variable). 
Then by the above, the sequence $(Y_1+Z,Y_2+Z,\ldots)$ almost surely takes values in $c$
and the limiting value of such a random sequence is just the random variable $Z$.
Let $\mu$ be the distribution of this random element of $c$.

\begin{lemma}
The measure, $\mu$, as constructed above is non-aligned and fully supported on $c$.
\end{lemma}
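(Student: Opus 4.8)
The plan is to use the fact that $\Psi$ is a continuous bijection from $c_0\times\R$ onto $c$: a point $y\in c$ with $y_\infty=a$ equals $\Psi\bigl(y-(a,a,\ldots),\,a\bigr)$ and no other pair maps to it, while continuity follows from $\|\Psi(x,t)-\Psi(x',t')\|\le\|x-x'\|+|t-t'|$. In particular $\Psi$ is Borel measurable, so $\mu$ is a well-defined Borel probability measure on $c$; and since $\mu_0(c_0)=1$ by the preceding lemma, $\mu$ is carried by $\Psi(c_0\times\R)=c$, which already gives $\mu(c)=1$. The two substantive claims — full support and non-alignedness — will each be handled by pulling the relevant set back through $\Psi$ and invoking the corresponding property of $\mu_0$ (established above) together with the fact that the standard normal law $\nu$ is fully supported and non-atomic on $\R$.

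For full support I would fix an arbitrary non-empty ball $B(y,r)\subseteq c$, write $a=y_\infty$ and $w=y-(a,a,\ldots)\in c_0$, so $y=\Psi(w,a)$. The key point is that whenever $\|x-w\|<r/2$ and $|t-a|<r/2$ one has $\|\Psi(x,t)-y\|\le\|x-w\|+|t-a|<r$, so $\Psi$ carries the box $B_{r/2}(w)\times(a-\tfrac r2,a+\tfrac r2)$ into $B(y,r)$. Hence $\mu(B(y,r))\ge\mu_0\bigl(B_{r/2}(w)\bigr)\cdot\nu\bigl((a-\tfrac r2,a+\tfrac r2)\bigr)$, a product of two positive numbers: the first because $\mu_0$ is fully supported on $c_0$, the second because a normal random variable charges every open interval.

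For non-alignedness there are two kinds of hyperplane. If $H=\{y\in c\colon y_j=\alpha\}$, then $\Psi(x,t)\in H$ precisely when $t=\alpha-x_j$, so Fubini together with non-atomicity of $\nu$ yields $\mu(H)=\int_{c_0}\nu\bigl(\{\alpha-x_j\}\bigr)\,d\mu_0(x)=0$. If instead $H=\{y\in c\colon y_\infty=\alpha\}$, then since $\lim_n\Psi(x,t)_n=t$ for every $x\in c_0$, we have $\Psi(x,t)\in H$ precisely when $t=\alpha$, so $\mu(H)=\nu(\{\alpha\})=0$, again by non-atomicity. The only step needing any care is the bookkeeping around $\Psi$ — its measurability, and the exact description of the preimages of balls and of coordinate and limit hyperplanes — after which everything reduces to properties of $\mu_0$ and $\nu$ proved earlier; I do not expect a genuine obstacle here.
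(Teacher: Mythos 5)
Your proof is correct and follows essentially the same route as the paper: pull balls and hyperplanes back through $\Psi$ and use full support of $\mu_0$, full support of $\nu$, and non-atomicity. The only cosmetic difference is in the coordinate-hyperplane case, where you apply Fubini with non-atomicity of $\nu$ while the paper observes that $Y_j+N$ is itself a (hence non-atomic) normal random variable; both arguments are fine.
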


\begin{proof}
As mentioned above, sequences sampled from $\mu$ almost surely lie in $c$,
so that the support of $\mu$ is a subset of $c$.
If $U=B(x,r)$ is an open ball in $c$, let $x_\infty= \lim_j x_j$
and let $(y_j)$ be defined by
$y_j=x_j-x_\infty$, so that $y\in c_0$. 
Now if $(Y_1,Y_2,\ldots)$ lies in $B_{c_0}(y,\frac r2)$ and
$Z$ lies in $(x_\infty-\frac r2,x_\infty+\frac r2)$, then 
$(Y_n+Z)_n$ lies in $U$. Since the events that 
$(Y_n)_n$ lies in $B_{c_0}(y,\frac r2)$ and $Z$ lies in $(x_\infty-\frac r2,
x_\infty+\frac r2)$ are independent and both of positive
measure, we see that $\mu(U)$ is bounded below by the product of the measures,
and hence is positive,
proving that $\mu$ is fully supported.

We now verify that $\mu$ is non-aligned.
If $H=\{x\in c\colon x_j=\alpha\}$, then $\mu(H)=\PP(Y_j+Z=\alpha)$. Since $Y_j$ and $Z$ are independent normal random variables, their sum is another normal random variable. Since normal random
variables are non-atomic, we see $\mu(H)=0$.

Similarly, if $L=\{x\in c\colon\lim_j x_j=\alpha\}$, then the event $L$
is equal to the event  $\{Z=\alpha\}$ up to a set of measure 0.
Since $Z$ is non-atomic, we see $\mu(L)=0$, so that $L$ is non-aligned.
\end{proof}

\section*{Acknowledgements}
We thank Keith Taylor and Javad Mashregi for helpful discussions.

\end{document}